\documentclass[12pt]{amsart}
\usepackage{amsmath}
\usepackage{amssymb}
\usepackage{amsthm}
\usepackage[numbers]{natbib}
\usepackage{enumerate}
\usepackage{dsfont}
\usepackage{a4wide}
\usepackage{color}
\usepackage{url}

\newcommand{\R}{{\mathbb R}}
\newcommand{\N}{{\mathbb N}}
\newcommand{\Z}{{\mathbb Z}}
\newcommand{\U}{{\mathbb U}}

\newcommand{\B}{{\mathbb B}}

\newcommand{\E}{\mathbb{E}}
\newcommand{\cE}{\mathcal{E}}

\newcommand{\Var}{\operatorname{Var}}
\newcommand{\Cov}{\operatorname{Cov}}

\newcommand{\dconv}{\Rightarrow}

\newcommand{\ind}{\mathds{1}}

\renewcommand{\P}{\mathbb{P}}

\newtheorem{theorem}{Theorem}
\newtheorem{proposition}{Proposition}
\newtheorem{lemma}{Lemma}

\theoremstyle{definition}

\newtheorem{remark}{Remark}

\def\alert#1{{\color{red}\fbox{TODO}}}
\newcommand{\comment}[1]{}
\def\limn{\lim_{n\to\infty}}

\def\indd#1{{\ind}_{\{#1\}}}

\def\pp#1{\left(#1\right)}

\def\bb#1{\left[#1\right]}

\def\summ#1#2#3{\sum_{#1=#2}^{#3}}
\def\mmid{\;\middle\vert\;}

\def\qmand{\quad\mbox{ and }\quad}
\newcommand{\eqnh}{\begin{eqnarray*}}
\newcommand{\eqne}{\end{eqnarray*}}
\newcommand{\eqnhn}{\begin{eqnarray}}
\newcommand{\eqnen}{\end{eqnarray}}
\newcommand{\equh}{\begin{equation}}
\newcommand{\eque}{\end{equation}}

\def\floor#1{\left\lfloor #1\right\rfloor}


\begin{document}\sloppy
\title[Infinite urn schemes]{From infinite urn schemes to decompositions of self-similar Gaussian processes}

\author{Olivier Durieu}
\address{
Olivier Durieu\\
Laboratoire de Math\'ematiques et Physique Th\'eorique, UMR-CNRS 7350\\
F\'ed\'eration Denis Poisson, FR-CNRS 2964\\
Universit\'e Fran\c{c}ois--Rabelais de Tours, Parc de Grandmont, 37200 Tours, France.
}
\email{olivier.durieu@lmpt.univ-tours.fr}

\author{Yizao Wang}
\address
{
Yizao Wang\\
Department of Mathematical Sciences\\
University of Cincinnati\\
2815 Commons Way\\
Cincinnati, OH, 45221-0025, USA.
}
\email{yizao.wang@uc.edu}

\date{\today}

\keywords{Infinite urn scheme, regular variation, functional central limit theorem, self-similar process, fractional Brownian motion, bi-fractional Brownian motion, decomposition, symmetrization}
\subjclass[2010]{Primary, 60F17, 60G22; Secondary, 60G15, 60G18}

\begin{abstract}
We investigate  a special case of infinite urn schemes first considered by \citet{Kar67},  especially its occupancy and odd-occupancy processes. We first propose a natural randomization of these two processes and their decompositions. We then establish functional central limit theorems, showing that each randomized process and its components converge jointly to a decomposition of certain self-similar Gaussian process. 
In particular, the randomized occupancy process and its components converge jointly to the decomposition of a time-changed Brownian motion $\mathbb B(t^\alpha), \alpha\in(0,1)$, and the randomized odd-occupancy process and its components converge jointly to a decomposition of fractional Brownian motion with Hurst index $H\in(0,1/2)$. The decomposition in the latter case is  a special case of the decompositions of bi-fractional Brownian motions recently investigated by \citet{lei09decomposition}. The randomized odd-occupancy process can also be viewed as correlated random walks, and in particular as a complement to the model recently introduced by \citet{HamShe13} as discrete analogues of fractional Brownian motions.
\end{abstract}
\maketitle

\section{Introduction}\label{sec:introduction}

We consider the classical infinite urn schemes, sometimes referred to as the  balls-in-boxes scheme. Namely, given a fixed infinite number of boxes, each time a label of the box is independently sampled according to certain probability $\mu$, and a ball is thrown into the corresponding box. This model has a very long history, dating back to at least \citet{Bah60}. For a recent survey from the probabilistic point of view, see \citet{GneHanPit07}. In particular, the sampling of the boxes forms naturally an exchangeable random partition of $\N$. Exchangeable random partitions have been extensively studied in the literature, and have connections to various areas in probability theory and related fields. See the nice monograph by \citet{pitman06combinatorial} for random partitions and more generally combinatorial stochastic processes. For various applications of the infinite urn schemes in biology, ecology, computational linguistics, among others, see for example \citet{BunFit93}.

In this paper, we are interested in a specific infinite urn scheme. 
More precisely, we consider 
$\mu$ as a probability measure on $\N:=\{1,2,\ldots\}$ which is regularly varying with index $1/\alpha$, $\alpha\in(0,1)$. See the definition in Section~\ref{sec:model}.
This model was first considered by \citet{Kar67} and we will refer to it by the {\it Karlin model} in the rest of the paper.  

We start by recalling the main results of \citet{Kar67}. Let $(Y_i)_{i\ge1}$ represents the independent sampling from $\mu$ for each round $i\ge1$, and 
\[
Y_{n,k} := \summ i1n \indd{Y_i=k}
\] 
be the total counts of sampling of the label $k$ in the first $n$ rounds, or equivalently how many balls thrown into the box $k$ in the first $n$ rounds.  In particular, Karlin investigated the asymptotics of two statistics: the total number of boxes that have been chosen in the first $n$ rounds, denoted by 
\[
Z^*(n) := \sum_{k\geq1}\indd{Y_{n,k}\neq 0},
\]
and the total number of boxes that have been chosen by an odd number of times in the first $n$ rounds, denoted by 
\[
U^*(n) := \sum_{k\ge1}\indd{Y_{n,k} \text{ is odd}}.
\]
The processes $Z^*$ and $U^*$ are referred to as the {\it occupancy process} and the {\it odd-occupancy process}, respectively.
While $Z^*$ is a natural statistics to consider in view of sampling different species, the investigation of $U^*$ is motivated via the following light-bulb-switching point of view from~\citet{spitzer64principles}. Each box $k$ may represent the status (on/off) of a light bulb, and each time when $k$ is sampled, the status of the corresponding light bulb is switched either from on to off or from off to on. Assume that all the light bulbs are off at the beginning. In this way, $U^*(n)$ represents the total number of light bulbs that are on at time $n$.
 
Central limit theorems have been established for both processes in \citep{Kar67}, in form of 
\equh\label{eq:karlin}
\frac{Z^*(n) - \E Z^*(n)}{\sigma_n}\Rightarrow {\mathcal N}(0,\sigma_Z^2)\qmand \frac{U^*(n) - \E U^*(n)}{\sigma_n}\Rightarrow {\mathcal N}(0,\sigma_U^2)
\eque
for some normalization $\sigma_n$, with $\sigma_Z^2$ and $\sigma_U^2$ explicitly given as the variances of the limiting normal distributions, and where $\Rightarrow$ denotes convergence in distribution. We remark that $\sigma_n^2$ is of the order $n^\alpha$, up to a slowly varying function at infinity.

The next seemingly obvious task is to establish the functional central limit theorems for the two statistics. However, to the best of our knowledge, this  has not been addressed in the literature.
Here by functional central limit theorems, or weak convergence, we are thinking of results in the form of (in terms of $Z^*$)
\equh\label{eq:karlinWIP}
\left\{\frac{Z^*(\floor{nt}) - \E Z^*(\floor{nt})}{\sigma_n}\right\}_{t\in[0,1]}\Rightarrow \{\mathbb Z^*(t)\}_{t\in[0,1]},
\eque
in space $D([0,1])$ for some normalization sequence $\sigma_n$ and a Gaussian process $\mathbb Z^*$.  In view of~\eqref{eq:karlin} and the fact that $\sigma_n^2$ has the same order as $n^\alpha$, the scaling limit $\mathbb Z^*$, if exists, is necessarily self-similar with index $\alpha/2$. 

In this paper, instead of addressing this question directly, we consider a more general framework by introducing the randomization to the Karlin model (see Section~\ref{sec:model} for the exact definitions). The randomization of the Karlin model reveals certain rich structure of the model. In particular, it has a natural decomposition. Take the randomized occupancy process $Z^\varepsilon$ for example. We will write
\[
Z^\varepsilon(n) = Z^\varepsilon_1(n)+Z^\varepsilon_2(n)
\]
and prove a joint weak convergence result in form of
\[
\frac1{\sigma_n}(Z^\varepsilon_1(\floor{nt}),Z^\varepsilon_2(\floor{nt}),Z^\varepsilon(\floor{nt}))_{t\in[0,1]}\dconv \left(\Z_1(t),\Z_2(t),\Z(t)\right)_{t\in[0,1]},
\]
in $D([0,1])^3$, such that 
\[
\Z = \Z_1+\Z_2 \quad\mbox{ with }\quad \mbox{ $\Z_1$ and $\Z_2$ independent}.
\] 
In other words, the limit trivariate Gaussian process $(\Z_1(t),\Z_2(t),\Z(t))_{t\in[0,1]}$ can be constructed by first considering two independent Gaussian processes $\Z_1$ and $\Z_2$ with covariance to be specified, and then set $\Z(t) := \Z_1(t)+\Z_2(t), t\in[0,1]$; in this way its finite-dimensional distributions are also determined. We refer such results as {\it weak convergence to the decomposition of a Gaussian process}. 
Similar results for the randomized odd-occupancy process are also obtained.
Here is a brief summary of the main results of the paper.
\begin{itemize}
\item 
As expected, various self-similar Gaussian processes appear in the limit. In this way, the randomized Karlin model and its components, including $Z^*$ and $U^*$ as special quenched cases, provide discrete counterparts of several self-similar Gaussian processes. These processes include notably the fractional Brownian motion with Hurst index $H = \alpha/2$, the bi-fractional Brownian motion with parameter $H = 1/2, K = \alpha$, and a new self-similar process $\Z_1$. 
\item Moreover, in view of the weak convergence to the decomposition, the randomized Karlin model are discrete counterparts of certain decompositions of self-similar Gaussian processes.
The randomized occupancy process and its two components converge weakly to a new decomposition of the time-changed Brownian motion $(\B(t^\alpha))_{t\ge0},\alpha\in(0,1)$ (Theorem~\ref{thm:Z}). The randomized odd-occupancy process and its two components converge weakly to a decomposition of the fractional Brownian motion with Hurst index $H =\alpha/2\in(0,1/2)$ (Theorem~\ref{thm:U}). This decomposition is a particular case of the decompositions of bi-fractional Brownian motion recently discovered by \citet{lei09decomposition}. 
\end{itemize}

Self-similar processes have been extensively studied in probability theory and related fields \citep{embrechts02selfsimilar}, often related to the notion of long-range dependence \citep{samorodnitsky06long,pipiras15long}.
Among the self-similar processes arising in the limit in this paper, the most widely studied one is the fractional Brownian motion. 
Fractional Brownian motions, as generalizations of Brownian motions, have been widely studied and used in various areas of probability theory and applications. These processes are the only centered Gaussian processes that are self-similar with stationary increments. 
The investigation of fractional Brownian motions dates back to \citet{kolmogorov40wienersche} and \citet{mandelbrot68fractional}. As for limit theorems, there are already several models that converge to fractional Brownian motions in the literature. See \citet{davydov70invariance,taqqu75weak,Enr04,kluppelberg04fractional,peligrad08fractional,mikosch07scaling,HamShe13} for a few representative examples. A more detailed and extensive survey of various models can be found in \citet{pipiras15long}. Besides, we also obtain limit theorems for bi-fractional Brownian motions introduced by \citet{houdre03example}. They often show up in decompositions of self-similar Gaussian processes; see for example \citep{ChaTud09,lei09decomposition}. However, we do not find other discrete models in the literature.
As for limit theorems illustrating decompositions of Gaussian processes as ours do, in the literature we found very few examples; see Remark~\ref{rem:1}. 

Our results  connect the Karlin model, a discrete-time stochastic process, to several continuous-time self-similar Gaussian processes and their decompositions. By introducing new discrete counterparts, we hope to improve our understanding of these Gaussian processes. In particular,
the proposed randomized Karlin model can also be viewed as correlated random walks, in a sense complementing the recent model introduced by \citet{HamShe13} that scales to fractional Brownian motions with Hurst index $H\in(1/2,1)$. Here, the randomized odd-occupancy process ($U^\varepsilon$ below) is defined in a similar manner, and scales to fractional Brownian motions with $H\in(0,1/2)$.

The paper is organized as follows. Section~\ref{sec:prelim} introduces the model in details and present the main results. Section~\ref{sec:Poi} introduces and investigates the Poissonized models. The de-Poissonization is established in Section~\ref{sec:dP}.

 \section{Randomization of Karlin model and main results}\label{sec:prelim}
\subsection{Karlin model and its randomization}\label{sec:model}
We have introduced the original Karlin model in Section~\ref{sec:introduction}. Here, we specify the regular variation assumption. Recall the definition of $(p_k)_{k\ge 1}$. We assume that $p_k$ is non-increasing, and define the infinite counting measure $\nu$ on $[0,\infty)$ by
$$
\nu(A):=\sum_{j\ge 1}\delta_{\frac{1}{p_j}}(A)
$$
for any Borel set $A$ of $[0,\infty)$, where $\delta_x$ is the Dirac mass at $x$.
For all $t>1$, set
\begin{equation}\label{nu}
\nu(t):=\nu([0,t])=\max\{j\ge 1\mid p_j\ge1/t\},
\end{equation}
where $\max\emptyset =0$.
Following \citet{Kar67}, the main assumption is that $\nu(t)$ is a regularly varying function at $\infty$ with index $\alpha$ in $(0,1)$, that is for all $x> 0$,
$\lim_{t\to\infty}\nu(tx)/\nu(t)= x^\alpha$,
or equivalently
\begin{equation}\label{RV}
 \nu(t)=t^{\alpha}L(t),\; t\ge 0,
\end{equation}
where $L(t)$ is a slowly varying function as $t\to\infty$, i.e.\ for all $x>0$, $\lim_{t\to\infty}L(tx)/L(t)=1$. For the sake of simplicity, one can think of 
\begin{equation*}
 p_k\underset{k\to\infty}{\sim}Ck^{-\frac1\alpha} \text{ for some }\alpha\in(0,1)\text{ and a normalizing constant }C>0.
\end{equation*}
This implies $\nu(t)\underset{t\to\infty}{\sim}C^\alpha t^{\alpha}$.

We have introduced two random processes considered in \citet{Kar67}: the occupancy process and the odd-occupancy process as
\[
Z^*(n):=\sum_{k\ge1}\ind_{\{Y_{n,k}\ne0\}}\qmand U^*(n):=\sum_{k\ge1}\ind_{\{Y_{n,k}\text{ is odd}\}}
\]
respectively. To introduce the randomization, 
let $\varepsilon:=(\varepsilon_k)_{k\ge1}$ be a sequence of i.i.d.\ Rademacher random variables (i.e.\ $\P(\varepsilon_k=1)=\P(\varepsilon_k=-1)=1/2$) defined on the same probability space as the $(Y_n)_{n\ge1}$ and independent from them. In the sequel, we just say that $\varepsilon$ is a Rademacher sequence in this situation (and thus implicitly, $\varepsilon$ will always be independent of $(Y_n)_{n\ge1}$).

Let $\varepsilon$ be a Rademacher sequence. We introduce the {\it randomized occupancy process} and the {\it randomized odd-occupancy process} by 
$$
Z^\varepsilon(n):=\sum_{k\ge 1}\varepsilon_k\ind_{\{Y_{n,k}\ne0\}}\qmand  U^\varepsilon(n):=\sum_{k\ge 1}\varepsilon_k\ind_{\{Y_{n,k}\text{ is odd}\}}.
$$
We actually will work with decompositions of these two processes given by
\[
Z^\varepsilon(n) = Z^\varepsilon_1(n)+Z^\varepsilon_2(n)\qmand U^\varepsilon(n) = U^\varepsilon_1(n)+U^\varepsilon_2(n),
\]
where
\begin{align}
Z_1^\varepsilon(n)&:=\sum_{k\ge 1}\varepsilon_k\left(\ind_{\{Y_{n,k}\ne0\}}-p_k(n)\right)\mbox{ and }
Z_2^\varepsilon(n):=\sum_{k\ge 1}\varepsilon_kp_k(n),\quad n\ge 1,\label{eq:Z}
\\
U_1^\varepsilon(n)&:=\sum_{k\ge 1}\varepsilon_k\left(\ind_{\{Y_{n,k}\text{ is odd}\}}-q_k(n)\right)\mbox{ and }
U_2^\varepsilon(n):=\sum_{k\ge 1}\varepsilon_kq_k(n),\quad n\ge 1,\label{eq:U}
\end{align}
with for all $k\ge 1$ and $n\ge 1$,
\begin{align*}
p_k(n)&:=\P\left(Y_{n,k}\ne0\right)=1-(1-p_k)^n,\\
q_k(n)&:=\P\left(Y_{n,k}\text{ is odd}\right)=\frac12(1-(1-2p_k)^n).
\end{align*}

In the preceding definitions, the exponent $\varepsilon$ refers to the randomness given by the Rademacher sequence $(\varepsilon_k)_{k\ge 1}$. Nevertheless, in some of the following statements, the sequence of $(\varepsilon_k)_{k\ge 1}$ can be chosen fixed (deterministic) in $\{-1,1\}^{\N}$. Then the corresponding processes can be considered as ``quenched'' versions of the randomized process. For this purpose, it is natural to introduce the centering with $p_k(n)$ and $q_k(n)$ respectively above.  Actually, we will establish quenched weak convergence for $Z^\varepsilon_1$ and $U^\varepsilon_1$ (see Theorem~\ref{thm:1} and Remark~\ref{rem:quenched}). With a little abuse of language, for both cases we keep $\varepsilon$ in the notation and add an explanation like `{\it for a Rademacher sequence $\varepsilon$}' or `{\it for all fixed $\varepsilon\in\{-1,1\}^\N$}', respectively.

\subsection{Main results}
As mentioned in the introduction, we are interested in the scaling limits of the previously defined processes. We denote by $D([0,1])$ the Skorohod space of cadlag functions on $[0,1]$ with the Skorohod topology (see \citep{Bil99}). Throughout, we write 
\[
\sigma_n := n^{\alpha/2} L(n)^{1/2},
\]
where $\alpha$ and $L$ are the same as in the regular variation assumption~\eqref{RV}. Observe that $\nu(n) = L(n) =  \sigma_n = 0$ for $n<1/p_1$. Therefore, when writing $1/\sigma_n$ we always assume implicitly $n\geq 1/p_1$.
We obtain similar results for $Z^\varepsilon$ and $U^\varepsilon$. Below are the main results of this paper.

\begin{theorem}\label{thm:Z}
 For a Rademacher sequence $\varepsilon$,
$$
\frac{1}{\sigma_n}\left(Z_1^\varepsilon(\floor{nt}),Z_2^\varepsilon(\floor{nt}),Z^\varepsilon(\floor{nt})\right)_{t\in[0,1]}\dconv \left(\Z_1(t),\Z_2(t),\Z(t)\right)_{t\in[0,1]},
$$
in $(D([0,1]))^3$, where $\Z_1,\Z_2,\Z$ are centered Gaussian processes, such that 
\[
\Z=\Z_1+\Z_2,
\]
$\Z_1$ and $\Z_2$ are independent,  
 and they have covariances
\begin{align*}
\Cov(\Z_1(s),\Z_1(t)) & =\Gamma(1-\alpha)\left((s+t)^\alpha-\max(s, t)^\alpha\right), \\
\Cov(\Z_2(s),\Z_2(t)) & =\Gamma(1-\alpha)\left(s^\alpha+t^\alpha-(s+t)^\alpha \right), \; \\
\Cov(\Z(s),\Z(t)) & =\Gamma(1-\alpha)\min(s, t)^\alpha,\quad s,t\geq 0.
\end{align*}
\end{theorem}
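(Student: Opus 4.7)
The plan is to follow the Poissonize--de-Poissonize route announced by the paper's organization. First, introduce an independent Poisson process $(N(t))_{t\ge 0}$ of intensity one and set $\tilde Y_{t,k}:=\sum_{i=1}^{N(t)}\indd{Y_i=k}$; these form a family of independent Poisson variables with parameters $(p_k t)_{k\ge 1}$. Define the Poissonized processes $\tilde Z_1^\varepsilon(t),\tilde Z_2^\varepsilon(t),\tilde Z^\varepsilon(t)$ exactly as in~\eqref{eq:Z} with $Y_{n,k}$ and $p_k(n)$ replaced by $\tilde Y_{t,k}$ and $\tilde p_k(t):=1-e^{-p_k t}$. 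The decisive gain is that each $\tilde Z_i^\varepsilon(t)$ becomes a sum of independent uniformly bounded random variables indexed by $k$, opening the door to the classical machinery for independent summands.

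For finite-dimensional convergence of the Poissonized triple on the scale $\sigma_n$, Cram\'er--Wold reduces matters to the asymptotic normality of an arbitrary linear combination $\sum_j a_j\tilde Z_1^\varepsilon(nt_j)/\sigma_n+\sum_j b_j\tilde Z_2^\varepsilon(nt_j)/\sigma_n$, which is handled by Lindeberg--Feller. A short direct computation in the Poissonized model gives
\[
\Cov\bigl(\tilde Z_1^\varepsilon(s),\tilde Z_1^\varepsilon(t)\bigr)=\sum_{k\ge1}\bigl(e^{-p_k(s\vee t)}-e^{-p_k(s+t)}\bigr),\quad \Cov\bigl(\tilde Z_2^\varepsilon(s),\tilde Z_2^\varepsilon(t)\bigr)=\sum_{k\ge1}\bigl(1-e^{-p_k s}\bigr)\bigl(1-e^{-p_k t}\bigr),
\]
together with $\Cov(\tilde Z_1^\varepsilon(s),\tilde Z_2^\varepsilon(t))=0$ by the independence of $(\tilde Y_{t,k})_k$ from $\varepsilon$ and by centering. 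Substituting $s,t\mapsto ns,nt$ and applying Karlin's asymptotic $\sum_k(1-e^{-p_k t})\sim\Gamma(1-\alpha)t^\alpha L(t)$ to the relevant exponential combinations produces the three announced covariances, and the vanishing cross-covariance forces $\Z_1\perp\Z_2$ in the Gaussian limit.

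For tightness in $D([0,1])$ of each Poissonized process, the independence-across-$k$ structure bounds $\E[(\tilde Z_i^\varepsilon(nt)-\tilde Z_i^\varepsilon(ns))^2]$ by a Karlin-type sum of order $\sigma_n^2(t-s)^\alpha$. Rosenthal's inequality applied to the sum of uniformly bounded summands then upgrades this to an even-moment estimate of order $2k$ with $k>1/\alpha$, giving $\E[(\tilde Z_i^\varepsilon(nt)-\tilde Z_i^\varepsilon(ns))^{2k}]\le C\sigma_n^{2k}(t-s)^{k\alpha}$ with $k\alpha>1$, which suffices for the standard moment criterion for tightness in $D([0,1])$.

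The last step, de-Poissonization, I expect to be the main obstacle. Since $|N(n)-n|=O_\P(\sqrt n)$ is asymptotically larger than $\sigma_n=n^{\alpha/2}L(n)^{1/2}$ when $\alpha<1$, the naive substitution $\tilde Z^\varepsilon(n)\approx Z^\varepsilon(N(n))$ fails on the scale $\sigma_n$, and the presence of the signs $\varepsilon_k$ rules out Karlin's monotone sandwich trick. What saves the proof is that on a window of width $\sqrt n(\log n)^{1/2}$ around $n$ the variance of $Z_i^\varepsilon(m)-Z_i^\varepsilon(n)$ is of order $n^{\alpha-1/2}L(n)(\log n)^{1/2}=o(\sigma_n^2)$, by Karlin's asymptotic applied to $\sum_k[(1-p_k)^n-(1-p_k)^m]$; a uniform maximal inequality combined with this estimate then transfers the Poissonized weak convergence back to the original discrete-time process.
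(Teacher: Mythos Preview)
Your overall architecture (Poissonize, prove FDD convergence and tightness for the Poissonized triple, then de-Poissonize) matches the paper's. The FDD step is fine, and your way of deducing $\Z_1\perp\Z_2$ from the vanishing annealed cross-covariance is legitimate, though the paper takes a different route: it proves the \emph{quenched} convergence of $Z_1^\varepsilon$ (Theorem~\ref{thm:1}, valid for every fixed $\varepsilon\in\{-1,1\}^\N$), and then obtains the joint law by conditioning on $\cE=\sigma(\varepsilon)$. The paper's route yields the stronger quenched statement as a by-product.

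The genuine gap is in your tightness step for $\tilde Z_1^\varepsilon$. Rosenthal's inequality for a sum of independent, uniformly bounded, centered summands does \emph{not} produce the clean bound $\E\bigl[(\tilde Z_1^\varepsilon(nt)-\tilde Z_1^\varepsilon(ns))^{2p}\bigr]\le C\,\sigma_n^{2p}|t-s|^{p\alpha}$. It gives (see the paper's Lemma~\ref{lem:momentbound})
\[
\E\bigl|\tilde Z_1^\varepsilon(nt)-\tilde Z_1^\varepsilon(ns)\bigr|^{2p}\le C_p\Bigl(V\bigl(n(t-s)\bigr)^p+V\bigl(n(t-s)\bigr)\Bigr)\le C_{p,\gamma}\Bigl(|t-s|^{\gamma p}\sigma_n^{2p}+|t-s|^{\gamma}\sigma_n^{2}\Bigr),
\]
for any $\gamma<\alpha$. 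The lower-order term is unavoidable: even if you first apply Khintchine in $\varepsilon$ and then take $\E_N$, you must control $\E_N\bigl[(\sum_k a_k^2)^p\bigr]$ for independent $a_k^2\in[0,1]$, and Rosenthal for nonnegative summands again returns $(\sum_k m_k)^p+\sum_k m_k$. After normalizing by $\sigma_n^{2p}$ the second term is $|t-s|^{\gamma}\sigma_n^{-2(p-1)}$, with exponent $\gamma<1$ in $|t-s|$, so the Kolmogorov--Chentsov/Billingsley moment criterion does \emph{not} apply uniformly in $n$. This is precisely why the paper's tightness proof for $\tilde Z_1^\varepsilon$ is the technical heart of Section~\ref{sec:tightness}: it runs a chaining argument (Lemma~\ref{lem:tight}) in which the dyadic levels down to scale $\sim\sigma_n/n$ are controlled by the moment bound, while the residual increment at the finest scale is handled by the deterministic pathwise estimate $|G(s)-G(t)|\le N(t+\delta)-N(t)+\delta$ (Lemma~\ref{lem:Poissonbound}) and a Poisson tail bound. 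Your argument works as written only for $\tilde Z_2^\varepsilon$, where the coefficients $\tilde p_k(nt)-\tilde p_k(ns)$ are deterministic and Khintchine gives the clean $p$-th power without the extra term (cf.\ the tightness in Proposition~\ref{prop:Poisson2}).

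On de-Poissonization, your sketch is different from the paper's and somewhat vague. The paper writes $Z^\varepsilon(\floor{nt})=\tilde Z^\varepsilon(n\lambda_n(t))$ with the random time change $\lambda_n(t)=\tau_{\floor{nt}}/n$, applies Billingsley's change-of-time lemma to transport the functional convergence of $\tilde Z_1^\varepsilon$, and controls the remaining deterministic drift $\sum_k\varepsilon_k(\tilde p_k(n\lambda_n(t))-p_k(\floor{nt}))$ by splitting $[0,1]$ into $[0,n^{-\beta}]$ and $[n^{-\beta},1]$ and using the subadditivity identity~\eqref{subadd:p} together with the tightness of $n^{1/2}(\lambda_n(t)-t)$ (Lemmas~\ref{lem:conv_lambda} and~\ref{lem:approx_lambda}). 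Your ``window of width $\sqrt n(\log n)^{1/2}$'' idea is in the right spirit, but you would still need a uniform (in $t$) control of the modulus of continuity of the Poissonized process over that window, which brings you back to the tightness issue above.
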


\begin{theorem}\label{thm:U}
 For a Rademacher sequence $\varepsilon$,
$$
\frac{1}{\sigma_n}\left(U_1^\varepsilon(\floor{nt}),U_2^\varepsilon(\floor{nt}),U^\varepsilon(\floor{nt})\right)_{t\in[0,1]}\dconv \left(\U_1(t),\U_2(t),\U(t)\right)_{t\in[0,1]},
$$
in $(D([0,1]))^3$, where $\U_1,\U_2,\U$ are centered Gaussian processes such that 
\[
\U=\U_1+\U_2,
\] 
$\U_1$ and $\U_2$ are independent,  
and they have covariances
\begin{align*}
\Cov(\U_1(s),\U_1(t))&=\Gamma(1-\alpha)2^{\alpha-2}\left((s+t)^\alpha-|t-s|^\alpha\right),\\
\Cov(\U_2(s),\U_2(t))&=\Gamma(1-\alpha)2^{\alpha-2}\left(s^\alpha+t^\alpha-(s+t)^\alpha \right),\\
\Cov(\U(s),\U(t))&=\Gamma(1-\alpha)2^{\alpha-2}\left(s^\alpha+t^\alpha-|t-s|^\alpha\right), \quad s,t\ge 0.
\end{align*}
\end{theorem}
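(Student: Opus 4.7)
The plan is to mirror the proof of Theorem~\ref{thm:Z} via the Poissonization/de-Poissonization scheme carried out in Sections~\ref{sec:Poi} and~\ref{sec:dP}. I first introduce the Poissonized counterparts: at time $t\ge 0$, let $\tilde Y_{t,k}$ be independent Poisson$(tp_k)$ random variables, realized by running $N(t)$ rounds of the original sampling, where $N$ is an independent unit-rate Poisson process. The key structural benefit is that the Bernoulli indicators $\ind_{\{\tilde Y_{t,k}\text{ odd}\}}$ are then independent across $k$, with success probabilities $q_k^{\mathrm{Poi}}(t):=\tfrac12(1-e^{-2tp_k})$. The Poissonized analogues of $U^\varepsilon, U_1^\varepsilon, U_2^\varepsilon$, defined by the same formulas with $Y_{n,k}$ and $q_k(n)$ replaced by $\tilde Y_{n,k}$ and $q_k^{\mathrm{Poi}}(n)$, then become sums of independent mean-zero summands (conditionally on $\varepsilon$ in the case of $\tilde U_1^\varepsilon$), which is precisely what the original $U^\varepsilon$ lacks. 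Once the functional CLT is established in the Poissonized setting, the de-Poissonization argument of Section~\ref{sec:dP} transfers the conclusion back to $U^\varepsilon$.

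In the Poissonized model, finite-dimensional convergence is a direct application of the Lindeberg CLT for triangular arrays of bounded independent summands: each summand is bounded by $1$, while the variances grow like $\sigma_n^2=n^\alpha L(n)\to\infty$. The only nontrivial ingredient is therefore the asymptotic evaluation of the covariances. For $0\le s\le t$, the independent-increment structure of the Poisson counts gives the clean identity
\[
\Cov\bigl(\ind_{\{\tilde Y_{ns,k}\text{ odd}\}},\ind_{\{\tilde Y_{nt,k}\text{ odd}\}}\bigr)=\tfrac14\bigl(e^{-2np_k(t-s)}-e^{-2np_k(s+t)}\bigr),
\]
which, summed over $k$ and combined with the regular-variation asymptotic
\[
\sum_{k\ge 1}(1-e^{-2np_k c})\sim 2^\alpha\,\Gamma(1-\alpha)\,c^\alpha\,n^\alpha L(n),\qquad c>0,
\]
obtained from~\eqref{RV}, the representation $\sum_{k\ge 1}g(p_k)=\int_0^\infty g(1/t)\,d\nu(t)$, and Karamata's theorem, yields
\[
\frac{1}{\sigma_n^2}\Cov\bigl(\tilde U_1^\varepsilon(\lfloor ns\rfloor),\tilde U_1^\varepsilon(\lfloor nt\rfloor)\bigr)\longrightarrow \Gamma(1-\alpha)\,2^{\alpha-2}\bigl((s+t)^\alpha-|t-s|^\alpha\bigr).
\]
The analogous computation for $\sum_{k\ge 1}q_k^{\mathrm{Poi}}(\lfloor ns\rfloor)q_k^{\mathrm{Poi}}(\lfloor nt\rfloor)$ yields the covariance of $\U_2$, and the sum of the two matches the stated covariance of $\U$; the ubiquitous factor $2^{\alpha-2}$ traces to the prefactor $\tfrac12$ in $q_k$ and to the factor $2p_k$ inside $(1-2p_k)^n$.

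The decomposition $\U=\U_1+\U_2$ and the independence of $\U_1$ and $\U_2$ in the limit come for free: at every finite $n,m$ one has $\Cov(U_1^\varepsilon(n),U_2^\varepsilon(m))=0$, because $\E[\ind_{\{Y_{n,k}\text{ odd}\}}]=q_k(n)$, because $\varepsilon$ is independent of $(Y_i)$, and because $\E[\varepsilon_k\varepsilon_j]=\delta_{k,j}$, and in the Gaussian limit uncorrelatedness implies independence. Tightness in $D([0,1])^3$ is obtained coordinate-wise via standard fourth-moment bounds on increments, again made tractable by the independence of the summands after Poissonization and controlled by the same regular-variation asymptotics. The main obstacle I foresee is precisely this uniform asymptotic analysis: the estimates above must be shown to hold uniformly in $s,t\in[0,1]$ rather than only pointwise, in order to close both the finite-dimensional convergence and tightness arguments. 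Once this is in place, the de-Poissonization step of Section~\ref{sec:dP} applies essentially verbatim to conclude Theorem~\ref{thm:U}.
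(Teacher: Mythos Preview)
Your overall plan---Poissonize, prove the FCLT for the Poissonized triple, then de-Poissonize---is precisely the paper's route, and your covariance calculations are correct. Two points deserve comment.

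On the independence of $\U_1$ and $\U_2$, your route (pre-limit orthogonality $\Cov(U_1^\varepsilon(n),U_2^\varepsilon(m))=0$, then ``uncorrelated Gaussians are independent'') is valid but requires that you first establish \emph{joint} finite-dimensional convergence of the pair to a Gaussian limit; you do not make this step explicit, though it does follow from Cram\'er--Wold and Lindeberg once the $k$-th summand (now involving both $\varepsilon_k$ and $N_k$) is treated as a single independent term. The paper goes a different way: it proves the stronger \emph{quenched} statement (Theorem~\ref{thm:1}) that $U_1^\varepsilon(\lfloor n\cdot\rfloor)/\sigma_n\Rightarrow\U_1$ for every fixed $\varepsilon\in\{-1,1\}^\N$, and then, since $U_2^\varepsilon$ is $\cE=\sigma(\varepsilon)$-measurable, conditions on $\cE$ to factor $\E[f(U_1^\varepsilon/\sigma_n)g(U_2^\varepsilon/\sigma_n)]$. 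Your route is slightly shorter; the paper's buys the quenched theorem (which contains Karlin's original CLTs) as a by-product.

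The genuine gap is tightness for $\tilde U_1^\varepsilon$. It is \emph{not} a matter of ``standard fourth-moment bounds,'' and the obstacle is not just making the asymptotics uniform. The uniform estimate $V(nt)\le C_\gamma t^\gamma\sigma_n^2$ of Lemma~\ref{lem:V} does hold, but feeding it into the $2p$-th moment only gives (Lemma~\ref{lem:momentbound})
\[
\E\bigl|\tilde U_1^\varepsilon(nt)-\tilde U_1^\varepsilon(ns)\bigr|^{2p}\le C_{p,\gamma}\bigl(|t-s|^{\gamma p}\sigma_n^{2p}+|t-s|^{\gamma}\sigma_n^{2}\bigr).
\]
After dividing by $\sigma_n^{2p}$, the second term has exponent $\gamma<\alpha<1$ in $|t-s|$, so no choice of $p$ yields a bound of the form $C|t-s|^{1+\delta}$ uniformly in $n$, and Kolmogorov--Chentsov/Billingsley~13.5 does not apply directly. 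The paper closes this via a chaining argument (Lemma~\ref{lem:tight}) down to a mesh of order $\sigma_n/n$, and at that finest scale replaces the moment bound by the pathwise control (Lemma~\ref{lem:Poissonbound})
\[
|\tilde U_1^\varepsilon(s)-\tilde U_1^\varepsilon(t)|\le N(t+\delta)-N(t)+\delta,\qquad t\le s\le t+\delta,
\]
together with an exponential bound for the driving Poisson process $N$. You should expect to need both ingredients; a single moment inequality will not suffice.
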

To achieve these results, we will first prove the convergence of the first ($Z^\varepsilon_1$ and $U^\varepsilon_1$) and the second ($Z^\varepsilon_2$ and $U^\varepsilon_2$) components, respectively.
For the first components we have the following stronger result. 
\begin{theorem}\label{thm:1}
For all fixed $\varepsilon\in\{-1,1\}^{\N}$, 
$$
\left(\frac{Z_1^\varepsilon(\floor{nt})}{\sigma_n}\right)_{t\in[0,1]}\dconv
\left(\Z_1(t)\right)_{t\in[0,1]}
\qmand\left(\frac{U_1^\varepsilon(\floor{nt})}{\sigma_n}\right)_{t\in[0,1]}\dconv
\left(\U_1(t)\right)_{t\in[0,1]},
$$
in $D([0,1])$, where $\Z_1$ and $\U_1$ are as in Theorems~\ref{thm:Z} and~\ref{thm:U}.
\end{theorem}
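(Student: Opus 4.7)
The plan is to establish both statements of Theorem~\ref{thm:1} in parallel via a Poissonization step followed by de-Poissonization. First I will introduce an independent unit-rate Poisson clock $N$ and set $\wt Y_{t,k}:=Y_{N(t),k}$; then the processes $\wt Y_{\cdot,k}$ are independent Poisson processes of rates $p_k$, and in particular independent across $k$. The Poissonized analogues
\[
\wt Z_1^\varepsilon(t):=\sum_{k\ge1}\varepsilon_k\bigl(\ind_{\{\wt Y_{t,k}\ne0\}}-(1-e^{-tp_k})\bigr),\qquad \wt U_1^\varepsilon(t):=\sum_{k\ge1}\varepsilon_k\bigl(\ind_{\{\wt Y_{t,k}\text{ odd}\}}-\tfrac12(1-e^{-2tp_k})\bigr)
\]
are now, for each fixed $\varepsilon\in\{-1,1\}^\N$, sums of \emph{independent}, centred, $[-1,1]$-valued random variables.

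For the finite-dimensional convergence of $\wt Z_1^\varepsilon(n\cdot)/\sigma_n$, the identity $\varepsilon_k^2=1$ makes the covariance independent of $\varepsilon$: for $0\le s\le t$,
\[
\Cov\bigl(\wt Z_1^\varepsilon(ns),\wt Z_1^\varepsilon(nt)\bigr)=\sum_{k\ge1}(1-e^{-nsp_k})e^{-ntp_k}=\sum_{k\ge1}\bigl(1-e^{-n(s+t)p_k}\bigr)-\sum_{k\ge1}\bigl(1-e^{-ntp_k}\bigr).
\]
Applying Karlin's Tauberian estimate $\sum_k(1-e^{-tp_k})\sim\Gamma(1-\alpha)t^\alpha L(t)$ to each term and using slow variation of $L$, the right-hand side tends to $\sigma_n^2\Gamma(1-\alpha)((s+t)^\alpha-t^\alpha)$, which matches the covariance of $\Z_1$; an identical computation with $2p_k$ in place of $p_k$ yields the covariance for $\U_1$. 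The Lindeberg condition is immediate because each summand is uniformly bounded while $\sigma_n\to\infty$, so the Lindeberg-Feller CLT combined with the Cram\'er-Wold device delivers fdd convergence for both processes.

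Tightness in $D([0,1])$ I will obtain from a Rosenthal-type moment bound. For $s<t$ the increment is $\wt Z_1^\varepsilon(nt)-\wt Z_1^\varepsilon(ns)=\sum_k\varepsilon_kD_k$, with $D_k:=\ind_{\{\wt Y_{ns,k}=0,\,\wt Y_{nt,k}\ne0\}}-(e^{-nsp_k}-e^{-ntp_k})$ independent, centred, and in $[-1,1]$. For any integer $p\ge1$,
\[
\E\bigl|\wt Z_1^\varepsilon(nt)-\wt Z_1^\varepsilon(ns)\bigr|^{2p}\le C_p\Bigl(\sum_k\Var D_k\Bigr)^p+C_p\sum_k\E D_k^{2p},
\]
and since $\sum_k\Var D_k\le\sum_k(e^{-nsp_k}-e^{-ntp_k})\sim\Gamma(1-\alpha)(t^\alpha-s^\alpha)\sigma_n^2$ together with $t^\alpha-s^\alpha\le(t-s)^\alpha$, the bound becomes $C'_p\sigma_n^{2p}(t-s)^{p\alpha}$ for $n$ large. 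Choosing $p>1/\alpha$ and combining two consecutive increments by Cauchy-Schwarz, Billingsley's moment criterion yields tightness; the argument transfers verbatim to $\wt U_1^\varepsilon$.

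Finally, de-Poissonization. The coupling produces the exact identity $\wt Z_1^\varepsilon(t)=Z_1^\varepsilon(N(t))+R^\varepsilon(t)$ with drift $R^\varepsilon(t):=\sum_k\varepsilon_k\bigl(p_k(N(t))-(1-e^{-tp_k})\bigr)$. I will first replace $Z_1^\varepsilon(N(nt))$ by $Z_1^\varepsilon(\floor{nt})$ using the tightness estimate of the previous step applied to the random window of width $|N(nt)-\floor{nt}|=O_\P(\sqrt n)=o(n)$; then control $\sup_{t\in[0,1]}|R^\varepsilon(nt)|/\sigma_n$ by a further Karamata estimate that exploits $\varepsilon_k^2=1$ to bound second moments independently of $\varepsilon$. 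The main obstacle I anticipate is precisely this last step: executing the de-Poissonization \emph{uniformly} in $t$ under only the regular-variation assumption, rather than a pointwise version.
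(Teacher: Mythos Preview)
Your Poissonization and finite-dimensional convergence are fine and match the paper. The real gap is in the tightness argument for $\wt Z_1^\varepsilon$ (and $\wt U_1^\varepsilon$). You write the Rosenthal bound
\[
\E\bigl|\wt Z_1^\varepsilon(nt)-\wt Z_1^\varepsilon(ns)\bigr|^{2p}\le C_p\Bigl(\sum_k\Var D_k\Bigr)^p+C_p\sum_k\E D_k^{2p},
\]
but then you only control the first term and conclude that ``the bound becomes $C'_p\sigma_n^{2p}(t-s)^{p\alpha}$''. The second term does not disappear: since $|D_k|\le1$ one has $\E D_k^{2p}\le\E D_k^2\le \tilde p_k(n(t-s))$, so $\sum_k\E D_k^{2p}\le V(n(t-s))\le C_\gamma(t-s)^{\gamma}\sigma_n^2$ for any $\gamma<\alpha$. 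After normalising by $\sigma_n^{2p}$ you are left with an additive piece $C(t-s)^{\gamma}\sigma_n^{2-2p}$, whose exponent in $|t-s|$ is $\gamma<1$ for every fixed $n$. No choice of $p$, and no Cauchy--Schwarz on two consecutive increments, pushes this exponent above $1$, so Billingsley's moment criterion does not apply. (The asymptotic $\sim\Gamma(1-\alpha)(t^\alpha-s^\alpha)\sigma_n^2$ is also not uniform in $s,t$; it fails precisely when $n(t-s)$ is bounded, which is the regime where the second term dominates.) The paper faces exactly this obstruction: its Lemma~2 records the two-term bound $C_{p,\gamma}(|t-s|^{\gamma p}\sigma_n^{2p}+|t-s|^{\gamma}\sigma_n^2)$, and then instead of Billingsley's criterion it runs a chaining argument down to mesh $2^{-k_n}\sim \sigma_n/n$, at which point it switches to the pathwise Poisson bound $|G(t)-G(s)|\le N(t+\delta)-N(t)+\delta$ (Lemma~3) to close the last gap. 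This extra ingredient is essential; your argument needs something equivalent.

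A second, smaller issue is the de-Poissonization. You write $\wt Z_1^\varepsilon(nt)=Z_1^\varepsilon(N(nt))+R^\varepsilon(nt)$ and then propose to replace $Z_1^\varepsilon(N(nt))$ by $Z_1^\varepsilon(\floor{nt})$ ``using the tightness estimate of the previous step''. But that estimate was for $\wt Z_1^\varepsilon$, not $Z_1^\varepsilon$, so as phrased this is circular. The paper avoids this by going in the opposite direction: it writes $Z_1^\varepsilon(\floor{nt})=\wt Z_1^\varepsilon(n\lambda_n(t))+\text{drift}$ with $\lambda_n(t)=\tau_{\floor{nt}}/n$, proves $\lambda_n\to\mathbb I$ uniformly a.s., and then applies the random change-of-time lemma to the already-established weak convergence of $\wt Z_1^\varepsilon$. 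Your identity can be salvaged the same way (compose with $\lambda_n$), but the direct ``replacement via tightness'' you sketch does not work.
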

\begin{remark}
\label{rem:quenched}
Theorem~\ref{thm:1} is a quenched functional central limit theorem. 
In particular, when taking $\varepsilon = \vec 1 = (1,1,\dots)\in\N$, Theorem~\ref{thm:1} recovers and generalizes the central limit theorems for $Z^*(n)$ and $U^*(n)$ established in \citet{Kar67} (formally stated in~\eqref{eq:karlin}): the (non-randomized) occupancy and odd-occupancy processes of the Karlin model scale to the continuous-time processes $\Z_1$ and $\U_1$, respectively.
Moreover, as the limits in Theorem~\ref{thm:1} do not depend on the value of $\varepsilon$, this implies the annealed functional central limit theorems (the same statement of Theorem~\ref{thm:1} remains true for a Rademacher sequence $\varepsilon$), and entails essentially the joint convergence to the decomposition. 
\end{remark}

Now we take a closer look at the processes appearing in Theorem~\ref{thm:Z} and Theorem~\ref{thm:U} and the corresponding decompositions. 
The decomposition of $\U$ is a special case of the general decompositions established in \citet{lei09decomposition} for bi-fractional Brownian motions. Recall that a bi-fractional Brownian motion with parameter $H\in(0,1),K\in(0,1]$ is a centered Gaussian process with covariance function
\equh\label{eq:R}
R^{H,K}(s,t) = \frac1{2^K}\pp{\pp{t^{2H}+s^{2H}}^K - |t-s|^{2HK}}.
\eque
It is noticed in \citep{lei09decomposition} that one can write
\equh\label{eq:lei}
\frac1{2^K}\pp{t^{2HK}+s^{2HK} - |t-s|^{2HK}} = R^{H,K}(s,t) + \frac1{2^K}\pp{t^{2HK}+s^{2HK}-(t^{2H}+s^{2H})^{K}},
\eque
where  the left-hand side above is a multiple of the covariance function of a fractional Brownian motion with Hurst index $HK$, and  the second term on the right-hand side above is positive-definite and hence a covariance function. Therefore,~\eqref{eq:lei} induces a decomposition of a fractional Brownian motion with Hurst index $HK$ into a bi-fractional Brownian motion and another self-similar Gaussian process.

Comparing this to Theorem~\ref{thm:U}, we notice that our decomposition of $\U$ corresponds to the special case of~\eqref{eq:lei} with $H=1/2, K=\alpha$. Up to a multiplicative constant, $\U$ is a fractional Brownian motion with Hurst index $H = \alpha/2$. The process  $\U_1$ is the bi-fractional Brownian motion with $H = 1/2, K=\alpha$, and it is also known as the odd-part of the two-sided fractional Brownian motion; see \citet{DzhZan04}. That is
$$
\left(\U_1(t)\right)_{t\ge 0}\stackrel{fdd}{=}\sqrt{2^{\alpha}\Gamma(1-\alpha)}\left(\frac12(\B^{\alpha/2}(t)-\B^{\alpha/2}(-t))\right)_{t\ge 0},
$$
where $\B^{\alpha/2}$ is a two-sided fractional Brownian motion on $\R$ with Hurst index $\alpha/2\in(0,1)$. 
The process $\U_2$ admits a representation
$$
\U_2(t)=2^{\alpha/2-1}\sqrt{\alpha}\int_0^\infty (1-e^{st})s^{-\frac{\alpha+1}{2}}d\B(s),\; t>0,
$$
where $(\B(t))_{t\in[0,1]}$ is the standard Brownian motion. 
It is shown that $\U_2(t)$ has a version with infinitely differentiable path for  $t\in (0,\infty)$ and absolutely continuous path for  $t\in [0,\infty)$. At the same time, $\U_2$ also appears in the decomposition of sub-fractional Brownian motions \citep{BojGorTal04,ChaTud09}.

For the decomposition of $\Z$ in Theorem~\ref{thm:Z},  to the best of our knowledge it is new in the literature. Remark that $\Z$ is simply a time-changed Brownian motion $(\Z(t))_{t\geq 0}\stackrel{fdd}=\Gamma(1-\alpha)(\B(t^\alpha))_{t\ge 0}$, and that $\Z_2\stackrel{fdd}= 2^{-\alpha/2+1}\U_2$. The latter is not surprising as 
 the coefficients $q_k(n)$ and $p_k(n)$ have the same asymptotic behavior. However, we cannot find related reference for $\Z_1$ in the literature. 
 The following remark on $\Z_1$ has its own interest.

\begin{remark} The process $\Z_1$ may be related to bi-fractional Brownian motions as follows. One can write
 \[
 (s^{1/\alpha}+t^{1/\alpha})^\alpha - |s-t| = 2\bb{\pp{s^{1/\alpha}+t^{1/\alpha}}^\alpha - \max(s,t)} + \bb{s + t - \pp{s^{1/\alpha}+t^{1/\alpha}}^\alpha}, s,t\geq 0.
 \]
That is, 
\[
(\mathbb V(t))_{t\ge0}\stackrel{fdd}=\pp{2\Z_1(t^{1/\alpha}) + \Z_2(t^{1/\alpha})}_{t\ge0},
\]
where $\Z_1$ and $\Z_2$ are as before and independent, and $\mathbb V$ is a centered Gaussian process with covariance 
\[
\Cov(\mathbb V(s),\mathbb V(t)) = \Gamma(1-\alpha)2^\alpha R^{1/(2\alpha),\alpha}(s,t).
\]
Therefore, as another consequence of our results, we have shown that for the bi-fractional Brownian motions, the covariance function $R^{H,K}$ in~\eqref{eq:R} is well defined for $H = 1/(2\alpha), K = \alpha$ for all $\alpha\in(0,1)$. The range $\alpha\in(0,1/2]$ is new. 
\end{remark}
To prove the convergence of each individual process, we apply the Poissonization technique. Each of the Poissonized  processes $\tilde\Z_1^\varepsilon,\tilde\Z_2^\varepsilon,\tilde\U_1^\varepsilon,\tilde\U_2^\varepsilon$ is  an infinite sum of independent random variables, of which the covariances are easy to calculate, and thus the finite-dimensional convergence follows immediately. The hard part for the Poissonized processes is to establish the tightness for $\tilde Z_1^\varepsilon$ and $\tilde U_1^\epsilon$. For this purpose we apply a chaining argument. Once the weak convergence for the Poissonized models are established, we couple the Poissonized models with the original ones and bound the difference. The second technical challenges lie in this de-Poissonization step. Remark that \citet{Kar67} also applied the Poissonization technique in his proofs. Since he only worked with central limit theorems and us the functional central limit theorems, our proofs are more involved.

\begin{remark}
One may prove the weak convergence  $(Z^\varepsilon(\floor{nt})/\sigma_n)_{t\in[0,1]}\Rightarrow(\Z(t))_{t\in[0,1]}$ and $(U^\varepsilon(\floor{nt})/\sigma_n)_{t\in[0,1]}\Rightarrow(\U(t))_{t\in[0,1]}$  directly, without using the decomposition. We do not present the proofs here as they do not provide insights on the decompositions of the limiting processes. 
\end{remark}
\begin{remark}\label{rem:1}
We are not aware of other limit theorems for the decomposition of processes in a similar manner as ours, but with two exceptions. One is the symmetrization well investigated in the literature of empirical processes \citep{VaaWel96}. Take for a simple example the empirical distribution function 
\[
\mathbb F_n(t) := \frac1n \summ i1n \indd {X_i\leq t}
\]
where $X_1,X_2,\dots$ are i.i.d.~with uniform $(0,1)$ distribution. By symmetrization one considers an independent Rademacher sequence $\varepsilon$ and
\[
\mathbb F_n^{\varepsilon}(t)  := \frac1n\summ i1n \varepsilon_i\indd{X_i\leq t},\quad \mathbb F_n^{\varepsilon,1}(t) := \frac1n\summ i1n\varepsilon_i\pp{\indd{X_i\leq t}-t}  \quad\mbox{ and }\quad \mathbb F_n^{\varepsilon,2}(t) := \frac1n\summ i1n \varepsilon_i t.
\]
It is straight-forward to establish 
\[
\sqrt n\,(\mathbb F_n^{\varepsilon}(t), \mathbb F_n^{\varepsilon,1}(t),\mathbb F_n^{\varepsilon,2}(t))_{t\in[0,1]}\Rightarrow(\B(t), \B(t)-t\B(1),t\B(1))_{t\in[0,1]}.
\]
This provides an interpretation of the definition of Brownian bridge via $\B^{bridge}(t) := \B(t) - t\B(1), t\in[0,1]$.

 The other example of limit theorems for decompositions is the recent paper by \citet{bojdecki12particle} who provided a particle-system point of view for the decomposition of fractional Brownian motions. The model considered there is very different from ours, and so is the decomposition in the limit. 
\end{remark}
\subsection{Correlated random walks}\label{sec:discussion}

We first focus our discussion on $U^\varepsilon$. One can interpret the process $U^\varepsilon$ as a correlated random walk by writing
\equh\label{eq:RW}
U^\varepsilon(n) = X_1+\cdots+X_n,
\eque 
for some random variables $(X_i)_{i\ge 1}$ taking values in $\{-1,1\}$ with equal probabilities, and the dependence among steps is determined by a random partition of $\N$. Viewing $X_i$ as the step of a random walk at time $i$, $U^\varepsilon$ in~\eqref{eq:RW} then represents a correlated random walk. To have such a representation, recall $(Y_i)_{i\ge1}$ and consider the random partition of $\N$  induced by the equivalence relation that $i\sim j$ if and only if $Y_i=Y_j$; that is, the integer $i$ and $j$ are in the same component of the partition if and only if  the $i$-th and $j$-th balls fall in the same box. 
Once $(Y_n)_{n\ge 1}$ is given and thus all components are determined, one can define $(X_n)_{n\ge 1}$ as follows.
For each $k\ge 1$, suppose all elements in component $k$ (defined as $\{i:Y_i = k\}$) are listed in increasing order $n_1<n_2<\cdots$, and set $X_{n_1}:=\varepsilon_k$ and iteratively $X_{n_{\ell+1}} := -X_{n_\ell}$. In this way, it is easy to see that each $X_n$ is taking values in $\{-1,1\}$ with equal probabilities, and conditioning on $(Y_n)_{n\ge1}$,  $X_i$ and $X_j$ are completely dependent if $i\sim j$, and independent otherwise. The verification of~\eqref{eq:RW} is straight-forward.

The above discussion describes how to construct a correlated random walk from random partitions in two steps. The first is to sample from a random partition. The second is to assign $\pm1$ values to $(X_n)_{n\ge 1}$ conditioned on the random partition sampled. 
A similar interpretation can be applied to another model of correlated random walks  introduced in \citet{HamShe13}. The Hammond--Sheffield model also constructed a collection of random variables taking values in $\{-1,1\}$, of which the dependence is determined by a random partition of $\Z$, in form of a random forest with infinitely many components indexed by $\Z$. There are two differences between the Hammond--Sheffield model and the randomized odd-occupancy process $U^\varepsilon$: first, the underlying random partition is different: notably, the random partition in the infinite urn scheme is exchangeable, while this is not the case for the random partition introduced in $\Z$ in \citep{HamShe13}; rather, the random partition there inherits certain long-range dependence property, which essentially determines that the Hurst index in the limit must be in $(1/2,1)$. Second, for $X_i$ in the same component of random partitions, Hammond--Sheffield model set them to take the {\it same value} (all $1$ or all $-1$ with equal probabilities), independently on each component.

The {\it alternative way} of assigning values for $X_i$ in the same component is the key idea in our framework. Clearly this has been considered by \citet{spitzer64principles} and \citet{Kar67}, if not earlier. Actually, \citet{HamShe13} suggested, as an open problem, to apply the alternative way of assigning values to their model and asked whether the modified model scales to fractional Brownian motions with Hurst index in $(0,1/2)$. In our point of view, in order to obtain a discrete model in the similar flavor of the Hammond--Sheffield model that scales to a fractional Brownian motion with Hurst index $H\in(0,1/2)$, the alternative way of assigning values is crucial, while the underlying random forest with long memory  is not that essential. Our results support this point of view. Actually, looking for a model in a similar spirit to complement the Hammond--Sheffield model as the discrete counterparts of fractional Brownian motions was another motivation for this paper. At the same time, the aforementioned suggestion in \citep{HamShe13} remains a challenging model to analyze. 

As for the occupancy process, similarly one can view $Z^\varepsilon$ as a correlated random walk. The random partition being the same, this time for each component $k$ with elements $n_1<n_2<\cdots$, we set $X_{n_1} = \epsilon_k, X_{n_i} = 0, i\ge 2$ to obtain
\[
Z^\varepsilon = X_1+\cdots+X_n. 
\]
The dependence of this random walk is simpler than the odd-occupancy process.

\section{Poissonization}\label{sec:Poi}
Recall that we are interested in the processes $Z^\varepsilon$ and $U^\varepsilon$, and instead to deal with them directly we work with the decompositions $Z^\varepsilon=Z_1^\varepsilon+Z_2^\varepsilon$ and $U^\varepsilon=U_1^\varepsilon+U_2^\varepsilon$ with the components defined in~\eqref{eq:Z} and~\eqref{eq:U}.

\subsection{Definitions and preliminary results}

The first step in the proofs is to consider the Poissonized versions of all the preceding processes in order to deal with sums of independent variables. Let $N$ be a Poisson process with intensity $1$, independent of the sequence $(Y_n)_{n\ge 1}$ and of the Rademacher sequence $\varepsilon$  considered before. We set 
\[
N_k(t):=\sum_{\ell=1}^{N(t)}\ind_{\{Y_\ell=k\}}, t\ge0, k\ge 1.
\]
Then the processes $N_k$, $k\ge 1$, are independent Poisson processes with respective intensity $p_k$.
We now consider the Poissonized processes, for all $t\ge 0$,
$$
\tilde Z^\varepsilon(t):= \sum_{k\ge 1}\varepsilon_k\ind_{\{N_k(t)\ne0\}} \qmand \tilde U^\varepsilon(t):=\sum_{k\ge 1}\varepsilon_k\ind_{\{N_k(t)\text{ is odd}\}}.
$$
These Poissonized randomized occupancy and odd-occupancy processes  have similar decompositions as the original processes
\[
\tilde Z^\varepsilon =\tilde Z_1^\varepsilon+\tilde Z_2^\varepsilon \qmand  \tilde U^\varepsilon =\tilde U_1^\varepsilon+\tilde U_2^\varepsilon
\]
with
\begin{align*}
\tilde Z_1^\varepsilon(t)& :=\sum_{k\ge 1}\varepsilon_k\left(\ind_{\{N_k(t)\ne0\}}-\tilde p_k(t)\right),\quad
\tilde Z_2^\varepsilon(t):=\sum_{k\ge 1}\varepsilon_k\tilde p_k(t),
\\
\tilde U_1^\varepsilon(t)&:=\sum_{k\ge 1}\varepsilon_k\pp{\ind_{\{N_k(t)\text{ is odd}\}}-\tilde q_k(t)}
,\quad
\tilde U_2^\varepsilon(t):=\sum_{k\ge 1}\varepsilon_k\tilde q_k(t),
\end{align*}
and
\begin{align*}
\tilde p_k(t) & :=\P(N_k(t)\ne0)=1-e^{-p_kt},\\
\tilde q_k(t) & :=\P(N_k(t)\text{ is odd})=\frac12(1-e^{-2p_kt}).
\end{align*}

Using the independence and the stationarity of the increments of Poisson processes, we derive the following useful identities.
For all $0\leq s\leq t$ and all $k\ge 1$,
\begin{align}\label{subadd:p}
0\leq\tilde p_k(t)-\tilde p_k(s)& =(1-\tilde p_k(s))\tilde p_k(t-s)  \le \tilde p_k(t-s),
\\
\label{subadd:q}
0\leq \tilde q_k(t)-\tilde q_k(s) & =(1-2\tilde q_k(s))\tilde q_k(t-s) \le \tilde q_k(t-s).
\end{align}
Note that, in particular, the functions $\tilde p_k$ and $\tilde q_k$ are sub-additive.
Further, we will have to deal with the asymptotics of the sums over $k$ of the $\tilde p_k$ or $\tilde q_k$. For this purpose, recall that (see \cite[Theorem 1]{Kar67}) the assumption \eqref{RV} implies
\begin{equation}\label{equiv}
V(t):=\sum_{k\ge 1}(1-e^{-p_kt})\sim \Gamma(1-\alpha)t^\alpha L(t),\;\text{ as }t\to\infty.
\end{equation}
We will need further estimates on the asymptotics of $V(t)$ as stated in the following lemma.

\begin{lemma}\label{lem:V}
For all $\gamma \in (0,\alpha)$, there exists a constant $C_{\gamma }>0$ such that 
$$
{V(nt)}\le C_{\gamma } t^{\gamma }\sigma_n^2, \;\text{ uniformly in }t\in[0,1], n\ge1.
$$
\end{lemma}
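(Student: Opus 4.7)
The plan is to deduce the bound from the regular variation of $V$ (established in~\eqref{equiv}) combined with the classical Potter-type inequality, complemented by the elementary pointwise estimate $V(s)\le s$, which holds since $1-e^{-p_k s}\le p_k s$ and $\sum_{k\ge 1}p_k=1$.

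Since $V$ is regularly varying at $\infty$ with positive index $\alpha$, Potter's inequality provides, for every $\delta\in(0,\alpha)$, constants $A>0$ and $X_0\ge 1$ such that
\[
\frac{V(x)}{V(y)}\le A\pp{\frac{x}{y}}^{\alpha-\delta}\qquad\text{whenever }X_0\le x\le y.
\]
I would then choose $\delta:=\alpha-\gamma$ and apply this inequality with $x=nt$ and $y=n$, for $t\in[0,1]$ and $nt\ge X_0$. This yields $V(nt)\le A\,t^{\gamma}V(n)$. Combined with $V(n)\le C\sigma_n^2$, itself a direct consequence of~\eqref{equiv} valid for $n$ large, this gives the claimed bound in the main regime $nt\ge X_0$.

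It remains to handle the regime $nt<X_0$. There I would fall back on $V(nt)\le nt$ to write
\[
\frac{V(nt)}{t^\gamma \sigma_n^2}\le \frac{nt^{1-\gamma}}{n^\alpha L(n)}\le \frac{X_0^{1-\gamma}}{n^{\alpha-\gamma}L(n)},
\]
using $t<X_0/n$ together with $\gamma<1$. Since $\gamma<\alpha$ and $L$ is slowly varying, $n^{\alpha-\gamma}L(n)\to\infty$, so this ratio stays bounded for $n$ large. The remaining small values of $n$, those in the finite range $[1/p_1,X_0]$, are handled by the same inequality $V(nt)\le nt\le X_0$: $\sigma_n^2$ is then bounded away from $0$ and $t^{1-\gamma}$ is bounded on $[0,1]$, so the ratio is uniformly bounded there too.

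The main obstacle, and the reason why Potter's inequality rather than~\eqref{equiv} alone is needed, is the control of the slowly varying factor $L(nt)/L(n)$ uniformly in $t\in[0,1]$. The pointwise asymptotic~\eqref{equiv} does not prevent this ratio from blowing up as $t\to 0$; Potter's inequality absorbs precisely these slowly varying fluctuations into the polynomial factor $(x/y)^{\alpha-\delta}$, and the choice $\delta=\alpha-\gamma$ converts that polynomial loss into the exact power $t^\gamma$ prescribed by the statement.
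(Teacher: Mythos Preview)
Your argument is correct and reaches the conclusion by a different route from the paper's. The paper first writes $V$ via the integral representation $V(t)=\int_0^\infty x^{-2}e^{-1/x}\nu(tx)\,dx$ and then applies Potter's theorem \emph{inside the integral} to the slowly varying function $L$ (after modifying it to $L^*$, bounded away from $0$ and $\infty$ on compact sets), obtaining the bound for all $n\ge 1/p_1$ and $t\in[0,1]$ in one stroke. You instead apply Potter's theorem directly to the regularly varying function $V$ itself, which avoids the integral representation but forces a case split: the main regime $nt\ge X_0$ is handled by Potter plus $V(n)\le C\sigma_n^2$, while the residual regime $nt<X_0$ is handled by the elementary inequality $V(s)\le s$. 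Your approach is more economical---no integral formula, no modification of $L$---whereas the paper's is more uniform (no case analysis). Both ultimately rest on Potter's inequality; the difference is whether it is applied to $L$ or to $V$.

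One cosmetic point: your ``remaining small values of $n$'' are not exactly those in $[1/p_1,X_0]$, but rather those in whatever finite range is left after your two ``$n$ large'' estimates (for $V(n)\le C\sigma_n^2$ and for $n^{\alpha-\gamma}L(n)$ large) kick in. On any such finite range $\sigma_n^2=\nu(n)\ge 1$ while $V(nt)\le nt$ is bounded, so the ratio is trivially controlled and nothing in the argument changes.
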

\begin{proof}
Recall the definition of the integer-valued function $\nu$ in \eqref{nu}. By integration by parts, we have for all $t>0$,
$$
V(t)=\int_0^\infty(1-e^{-t/x})d\nu(x)=\int_0^\infty x^{-2}e^{-1/x} \nu(tx) dx.
$$
Observe that $\nu(t) = 0$ if and only if $t\in[0,1/p_1)$ by definition, and in particular $L(t) = 0$ if and only if $t\in[0,1/p_1)$. Thus, 
\[
\frac{V(nt)}{\sigma_n^2} = \int_{1/(ntp_1)}^\infty x^{-2}e^{-1/x}\nu(ntx)dx  = t^\alpha \int_{1/(ntp_1)}^\infty x^{\alpha-2}e^{-1/x}\frac{L(ntx)}{L(n)}dx.  
\]
Now we introduce 
\[
L^*(t) = \left\{\begin{array}{l@{\mbox{ if }}l}L(1/p_1) & t\in[0,1/p_1)\\ L(t) & t\in[1/p_1,\infty)\end{array}\right.,
\]
and obtain
\[
\frac{V(nt)}{\sigma_n^2} \leq t^\alpha\int_0^\infty x^{\alpha-2}e^{-1/x}\frac{L^*(ntx)}{L^*(n)}dx.
\]
Let $\delta>0$ be such that $\alpha+\delta<1$ and $\alpha-\delta>\gamma$. Observe that $L^*$ has the same asymptotic behavior as $L$ by definition. In addition, $L^*$ is bounded away from $0$ and $\infty$ on any compact set of $[0,\infty)$. Thus,  by Potter's theorem (see \cite[Theorem 1.5.6]{BinGolTeu87}) there exists a constant $C_\delta>0$ such that for all $x,y>0$
$$
\frac{L^*(x)}{L^*(y)}\le C_\delta \max\left(\left(\frac xy\right)^{\delta}, \left(\frac xy\right)^{-\delta} \right).
$$
We infer, uniformly in $t\in[0,1]$, 
\begin{align*}
 \frac{V(nt)}{\sigma_n^2}
&\le  C_\delta  t^\alpha \int_0^\infty x^{\alpha-2} e^{-1/x} \max\left(\left(tx \right)^{\delta}, \left(tx\right)^{-\delta} \right) dx\\
&\le  C_\delta  t^{\alpha-\delta} \left( \int_0^1 x^{\alpha-\delta-2} e^{-1/x}dx +  \int_1^\infty x^{\alpha+\delta-2} e^{-1/x} dx\right),
\end{align*}
and both integrals are finite (the second one because we have taken $\delta$ such that $\alpha+\delta<1$). Further, $t^{\alpha-\delta}\le t^{\gamma }$ for all $t\in[0,1]$ and thus the lemma is proved.
\end{proof}

\subsection{Functional central limit theorems}
We now establish the invariance principles for the Poissonized processes.

\begin{proposition}\label{prop:Poisson1}
 For all fixed $\varepsilon\in\{-1,1\}^{\N}$, 
$$
\left(\frac{\tilde Z_1^\varepsilon(nt)}{\sigma_n}\right)_{t\in[0,1]}\dconv
\left(\Z_1(t)\right)_{t\in[0,1]}
\;\text{ and }\;
\left(\frac{\tilde U_1^\varepsilon(nt)}{\sigma_n}\right)_{t\in[0,1]}\dconv
\left(\U_1(t)\right)_{t\in[0,1]},
$$
in $D([0,1])$, where $\Z_1$ is as in  Theorem~\ref{thm:Z} and $\U_1$ is as in Theorem~\ref{thm:U}.
\end{proposition}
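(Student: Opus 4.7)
The plan is to verify convergence of finite-dimensional distributions and tightness separately in $D([0,1])$. The key structural fact, which makes the Poissonized processes tractable, is that for each fixed $t$ the random variables $\ind_{\{N_k(t)\ne 0\}}$ (respectively $\ind_{\{N_k(t)\text{ is odd}\}}$) are independent across $k$, since the Poisson processes $N_k$ are independent. Hence both $\tilde Z_1^\varepsilon(t)$ and $\tilde U_1^\varepsilon(t)$, together with their increments, are infinite sums of independent centered summands bounded in absolute value by $1$.

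\textbf{Finite-dimensional convergence.} By the Cram\'er--Wold device, it is enough to treat arbitrary linear combinations at finitely many times. Each such combination is again an infinite sum over $k$ of independent centered bounded summands, and a direct computation using independence and stationarity of the $N_k$ yields, for $0\le s\le t$,
\[
\E\bigl[\tilde Z_1^\varepsilon(s)\tilde Z_1^\varepsilon(t)\bigr]=\sum_{k\ge 1}\tilde p_k(s)(1-\tilde p_k(t))=V(s+t)-V(s\vee t),
\]
and similarly $\E[\tilde U_1^\varepsilon(s)\tilde U_1^\varepsilon(t)]=\frac14\bigl(V(2(s+t))-V(2|t-s|)\bigr)$. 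After rescaling by $\sigma_n^{-2}$, the asymptotics~\eqref{equiv} recover exactly the covariance functions of $\Z_1$ and $\U_1$ in Theorems~\ref{thm:Z} and~\ref{thm:U}. The one-dimensional CLT for each linear combination follows by checking Lyapunov's condition: the boundedness of the $k$-th summand forces $\E|\cdot|^3\le\E|\cdot|^2$, so $\sum_k\E|\cdot|^3\le V(n)=O(\sigma_n^2)=o(\sigma_n^3)$.

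\textbf{Tightness via chaining.} For $0\le s<t\le 1$, the increment
\[
\Delta_n(s,t):=\tilde Z_1^\varepsilon(nt)-\tilde Z_1^\varepsilon(ns)
\]
is still a sum over $k$ of independent centered $[-1,1]$-valued summands, and the subadditivity relation~\eqref{subadd:p} forces the sum of their variances to be at most $V(n(t-s))$. Bernstein's inequality then yields the concentration estimate
\[
\P(|\Delta_n(s,t)|\ge\lambda)\le 2\exp\Bigl(-c\min\bigl(\lambda^2/V(n(t-s)),\lambda\bigr)\Bigr),
\]
and integrating the tail together with Lemma~\ref{lem:V} gives, for any $\gamma\in(0,\alpha)$ and any integer $p\ge 1$,
\[
\E\left|\frac{\Delta_n(s,t)}{\sigma_n}\right|^{2p}\le C_{p,\gamma}\bigl((t-s)^{\gamma p}+\sigma_n^{-2p}\bigr).
\]
Choosing $p$ large enough that $\gamma p>1$, the leading term furnishes Billingsley's moment condition of the form $|t-s|^{1+\delta}$, while the residual $\sigma_n^{-2p}$ decays uniformly and is absorbed by restricting the chaining mesh to scale~$\sigma_n^{-2/\gamma}$. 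Since $\Z_1$ and $\U_1$ have continuous sample paths, this delivers tightness in $D([0,1])$ and hence the stated weak convergence. The argument for $\tilde U_1^\varepsilon$ is identical, with $\tilde q_k$ and~\eqref{subadd:q} replacing $\tilde p_k$ and~\eqref{subadd:p}.

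\textbf{Main obstacle.} The delicate step is precisely this tightness estimate. The second-moment bound $V(n(t-s))/\sigma_n^2\lesssim(t-s)^\alpha$ has H\"older exponent $\alpha<1$, which is strictly insufficient for a Kolmogorov--Chentsov style criterion. The passage to higher moments via Bernstein --- using \emph{simultaneously} the sharp variance control from Lemma~\ref{lem:V} and the uniform $[-1,1]$ boundedness of the summands --- is what lifts the exponent of the $2p$-th moment of the rescaled increment past $1$. A Rosenthal-type bound based on the variance alone would not suffice; it is the sub-Gaussian tail on the correct scale $\sqrt{V(n(t-s))}$ that does the work.
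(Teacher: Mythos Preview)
Your finite-dimensional argument is correct and essentially matches the paper's: the covariance identities reduce to $V$, and the Lindeberg/Lyapunov condition is immediate from the uniform boundedness of the summands.

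The tightness sketch, however, has a genuine gap. The moment estimate
\[
\E\bigl|\Delta_n(s,t)/\sigma_n\bigr|^{2p}\le C_{p,\gamma}\bigl((t-s)^{\gamma p}+\sigma_n^{-2p}\bigr)
\]
is fine, and chaining over a dyadic grid down to mesh $\sigma_n^{-2/\gamma}$ does control the maximal increment \emph{over the grid}. But the processes $\tilde Z_1^\varepsilon(n\cdot)$ and $\tilde U_1^\varepsilon(n\cdot)$ are genuinely discontinuous: they jump at the arrival times of $N(n\cdot)$. Hence one cannot pass from the grid to the full uniform modulus $\sup_{|t-s|\le\delta}|G(nt)-G(ns)|$ by continuity, and your phrase ``absorbed by restricting the chaining mesh to scale $\sigma_n^{-2/\gamma}$'' does not supply the missing step. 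Something must control $\sup_{s\in[t_i,t_{i+1}]}|G(ns)-G(nt_i)|$ inside each finest cell, and neither the moment bound nor Bernstein does this.

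The paper closes exactly this gap with an additional pathwise estimate (Lemma~\ref{lem:Poissonbound}): for $t\le s\le t+\delta$,
\[
|G(s)-G(t)|\le N(t+\delta)-N(t)+\delta\quad\text{almost surely.}
\]
The chaining in Lemma~\ref{lem:tight} is then stopped at depth $k_n$ with $n\delta 2^{-k_n}\asymp\sigma_n$, and the residual below that scale is handled by an exponential Markov inequality for the Poisson increments. Without this second ingredient the argument does not close; your outline needs it (or an equivalent device) to be complete.

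One smaller point: the assertion that ``a Rosenthal-type bound based on the variance alone would not suffice'' is misleading. The paper's Lemma~\ref{lem:momentbound} is precisely a Rosenthal-type estimate, proved by direct induction, yielding $\E|G(nt)-G(ns)|^{2p}\le C_p\bigl(V(n(t-s))^p+V(n(t-s))\bigr)$; this is equivalent for the purposes of the chaining to what you extract from Bernstein. The sub-Gaussian tail is not what does the extra work---the pathwise Poisson bound at the finest scale is.
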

\begin{proof} In the sequel $\varepsilon\in\{-1,1\}^{\N}$ is fixed.
The proof is divided into three steps.\medskip

\noindent{\em (i) The covariances.}
Using the independence of the $N_k$, and that $\varepsilon_k^2=1$ for all $k\ge 1$, we infer that for all $0\leq s\leq t$,
\begin{multline*}
\Cov\left(\tilde Z_1^\varepsilon(ns),\tilde Z_1^\varepsilon(nt)\right)
=\sum_{k\ge 1}\left(\P(N_k(ns)\ne 0,\,N_k(nt)\ne 0)-\tilde p_k(ns)\tilde p_k(nt)\right)\\
=\sum_{k\ge 1}\left((1-e^{-p_kns})-(1-e^{-p_kns})(1-e^{-p_knt})\right)
=V(n(s+t))-V(nt),
\end{multline*}
whence by \eqref{equiv}, 
\[
\limn \frac{1}{\sigma_n^2}\Cov\left(\tilde Z_1^\varepsilon(ns),\tilde Z_1^\varepsilon(nt)\right)=\Gamma(1-\alpha)\left((s+t)^\alpha-t^\alpha\right).
\]
For the odd-occupancy process, using the independence  and the stationarity of the increments of the Poisson processes, for $0\le s \le t$,
\begin{align*}
\Cov&\left(\tilde U_1^\varepsilon(ns),\tilde U_1^\varepsilon(nt)\right)
 =  \sum_{k\ge 1}\left(\P(N_k(ns)\text{ is odd},\,N_k(nt)\text{ is odd})-\tilde q_k(ns)\tilde q_k(nt)\right)\\
& =  \sum_{k\ge 1}\left(\tilde q_k(ns)(1-\tilde q_k(n(t-s)))-\tilde q_k(ns)\tilde q_k(nt)\right)\\
& =  \frac14\sum_{k\ge 1}(1-e^{-2p_kns})(e^{-2p_kn(t-s)}+e^{-2p_knt}) = \frac14\left(V(2n(t+s))+V(2n(t-s))\right).
\end{align*}
Thus, again by \eqref{equiv},
\[
\limn\frac{1}{\sigma_n^2}\Cov\left(\tilde U_1^\varepsilon(ns),\tilde U_1^\varepsilon(nt)\right) 
=\Gamma(1-\alpha)2^{\alpha-2}\left((t+s)^\alpha-(t-s)^\alpha \right).
\]

\noindent{\em (ii) Finite-dimensional convergence.} 
The finite-dimensional convergence for both processes is a consequence of the Lindeberg central limit theorem, using 
the Cram\'er--Wold device. Indeed, for any choice of constants $a_1,\ldots,a_d\in\R$, $d\ge 1$, and any reals $t_1,\ldots,t_d\in[0,1]$, the independent random variables $\varepsilon_k\sum_{i=1}^da_i(\ind_{\{N_k(nt_i)\ne0\}}-\tilde p_k(nt_i))$, $k\ge 1$, $n\ge 1$ are uniformly bounded. This entails the finite-dimensional convergence for $(\tilde Z_1^\varepsilon(nt)/\sigma_n)_{t\in[0,1]}$. The proof for $(\tilde U_1^\varepsilon(nt)/\sigma_n)_{t\in[0,1]}$ is similar.

\medskip

\noindent{\em (iii)  Tightness.}
The proof of the tightness is technical and delayed to Section~\ref{sec:tightness}.
\end{proof}

\begin{proposition}\label{prop:Poisson2}
 For any Rademacher sequence $\varepsilon=(\varepsilon_k)_{k\ge1}$, 
$$
\left(\frac{\tilde Z_2^\varepsilon(nt)}{\sigma_n}\right)_{t\in[0,1]}\dconv
\left(\Z_2(t)\right)_{t\in[0,1]}
\;\text{ and }\;
\left(\frac{\tilde U_2^\varepsilon(nt)}{\sigma_n}\right)_{t\in[0,1]}\dconv
\left(\U_2(t)\right)_{t\in[0,1]},
$$
in $D([0,1])$, where $\Z_2$ is as in Theorem~\ref{thm:Z} and $\U_2$ is as in Theorem~\ref{thm:U}.
\end{proposition}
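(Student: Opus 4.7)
The plan is to mirror the three-step structure used for Proposition~\ref{prop:Poisson1}: compute the limiting covariances, deduce finite-dimensional convergence via Lindeberg's CLT and the Cram\'er--Wold device, and then verify tightness through a moment estimate. The task is substantially simpler than for the first components because $\tilde Z_2^\varepsilon$ and $\tilde U_2^\varepsilon$ are weighted Rademacher series with \emph{deterministic} coefficients $\tilde p_k(nt)$ and $\tilde q_k(nt)$, so the entire randomness comes from $(\varepsilon_k)_{k\ge 1}$ and no Poisson fluctuation intervenes.

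For the covariances, I would start from the elementary identities $1-\tilde p_k(n(s+t))=(1-\tilde p_k(ns))(1-\tilde p_k(nt))$ and $1-2\tilde q_k(n(s+t))=(1-2\tilde q_k(ns))(1-2\tilde q_k(nt))$. Expanding and using $\E[\varepsilon_j\varepsilon_k]=\delta_{j,k}$, I obtain
\[
\Cov(\tilde Z_2^\varepsilon(ns),\tilde Z_2^\varepsilon(nt))=V(ns)+V(nt)-V(n(s+t))
\]
and
\[
\Cov(\tilde U_2^\varepsilon(ns),\tilde U_2^\varepsilon(nt))=\tfrac14\bigl(V(2ns)+V(2nt)-V(2n(s+t))\bigr),
\]
which, after dividing by $\sigma_n^2$ and invoking~\eqref{equiv} together with the slow variation of $L$, tend to the covariances of $\Z_2$ and $\U_2$ stated in Theorems~\ref{thm:Z} and~\ref{thm:U}. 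Finite-dimensional convergence then follows from the Lindeberg CLT applied (via Cram\'er--Wold) to the independent bounded summands $\varepsilon_k\sum_i a_i\tilde p_k(nt_i)$: the Lindeberg condition is immediate since each summand is bounded by $\sum_i|a_i|=o(\sigma_n)$.

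For tightness, I would apply Khintchine's inequality to the Rademacher sum
\[
\tilde Z_2^\varepsilon(nt)-\tilde Z_2^\varepsilon(ns)=\sum_{k\ge 1}\varepsilon_k\bigl(\tilde p_k(nt)-\tilde p_k(ns)\bigr),\qquad 0\le s\le t\le 1,
\]
yielding, for every integer $p\ge 1$,
\[
\E\bigl|\tilde Z_2^\varepsilon(nt)-\tilde Z_2^\varepsilon(ns)\bigr|^{2p}\le C_p\Bigl(\sum_{k\ge 1}(\tilde p_k(nt)-\tilde p_k(ns))^2\Bigr)^p\le C_p V(n(t-s))^p,
\]
where the last bound uses $(\tilde p_k(nt)-\tilde p_k(ns))^2\le \tilde p_k(nt)-\tilde p_k(ns)\le \tilde p_k(n(t-s))$ by the subadditivity~\eqref{subadd:p}. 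Lemma~\ref{lem:V} then dominates the right-hand side by $C_{p,\gamma}\sigma_n^{2p}(t-s)^{p\gamma}$ for any $\gamma\in(0,\alpha)$. Choosing $p$ so that $p\gamma>1$ produces a Kolmogorov--Chentsov moment bound which, combined with $\tilde Z_2^\varepsilon(0)=0$, gives tightness of the normalized family in $C([0,1])$, hence in $D([0,1])$. The identical argument, using~\eqref{subadd:q} in place of~\eqref{subadd:p}, handles $\tilde U_2^\varepsilon$.

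The main (mild) technical point will be applying Lemma~\ref{lem:V} uniformly on $[0,1]$: for $\tilde U_2^\varepsilon$ one encounters $V(2n(t-s))$ rather than $V(n(t-s))$, which is handled either by reapplying the lemma with $n'=2n$ or by absorbing the factor $2^\alpha$ into the constant via the slow variation of $L$. In contrast with Proposition~\ref{prop:Poisson1}, no chaining argument is needed here, because the only randomness lies in the Rademacher signs and Khintchine directly furnishes arbitrarily high moments from an $\ell^2$ control of the coefficients.
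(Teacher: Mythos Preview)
Your proposal is correct and follows essentially the same approach as the paper: covariance asymptotics via $V$ and~\eqref{equiv}, finite-dimensional convergence by Lindeberg and Cram\'er--Wold, and tightness via a $2p$-th moment bound combined with Lemma~\ref{lem:V}. The only cosmetic differences are that the paper invokes Burkh\"older rather than Khintchine (equivalent here), and it dispatches $\tilde U_2^\varepsilon$ in one line by observing $\tilde U_2^\varepsilon(t)=\tfrac12\tilde Z_2^\varepsilon(2t)$ rather than repeating the argument.
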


\begin{proof}
First remark that, since for all $t\ge 0$, $\tilde q_k(t)=\frac12 \tilde p_k(2t)$, we have $\tilde U_2^\varepsilon(t)=\frac12 \tilde Z_2^\varepsilon(2t)$. Thus the second convergence follows from the first one.
\medskip

\noindent {\em (i) The covariances.}
Since the $\varepsilon_k$ are independent, using \eqref{equiv}, we have for all $t,s\ge 0$
\begin{align*}
\frac{1}{\sigma_n^2}\Cov&(\tilde Z_2^\varepsilon(nt),\tilde Z_2^\varepsilon(ns))
=\frac{1}{\sigma_n^2}\sum_{k\ge 1}\E(\varepsilon_k^2)\tilde p_k(nt)\tilde p_k(ns)
=\frac{1}{\sigma_n^2}\sum_{k\ge 1}(1-e^{-p_k nt})(1-e^{-p_k ns})\\
&=\frac{1}{\sigma_n^2}\left(V(nt)+V(ns)-V(n(t+s))\right)
 \rightarrow \Gamma(1-\alpha)\left( t^\alpha + s^\alpha -(t+s)^\alpha\right) \mbox{ as $n\to\infty$.}
\end{align*}
\medskip

\noindent {\em (ii) Finite-dimensional convergence.}
Since $Z_2^\varepsilon$ is a sum of independent bounded random variables, the finite-dimensional convergence follows from the Cram\'er--Wold device and the Lindeberg central limit theorem.
\medskip

\noindent {\em (iii) Tightness.}
Let $p$ be a positive integer. By Burkh\"older inequality, there exists a constant $C_p>0$ such that for all $0\le s\le t \le 1$,
\begin{align*}
\E\left|\frac{1}{\sigma_n}\left(Z_2^\varepsilon(nt)-Z_2^\varepsilon(ns) \right)\right|^{2p}
&\le C_p\frac{1}{\sigma_n^{2p}}\left( \sum_{k\ge 1} (\tilde p_k(nt)-\tilde p_k(ns))^2\right)^p\\
&\le C_p\frac{1}{\sigma_n^{2p}}\left( \sum_{k\ge 1} \tilde p_k(n(t-s))^2\right)^p
=C_p\left(\frac{V(n(t-s))}{\sigma_n^{2}}\right) ^p.
\end{align*}
We now use Lemma~\ref{lem:V}. Let $\gamma \in(0,\alpha)$. There exists $C_{\gamma }>0$
such that
$$
\E\left|\frac{1}{\sigma_n}\left(Z_2^\varepsilon(nt)-Z_2^\varepsilon(ns) \right)\right|^{2p}
\le C_pC_{\gamma }^p |t-s|^{\gamma  p}\, \text{ uniformly in }|t-s|\in[0,1].
$$
Choosing $p$ such that $\gamma  p>1$, this bound gives the tightness \citep[Theorem 13.5]{Bil99}.
\end{proof}

\subsection{Tightness for $\tilde Z_1^\varepsilon$ and $\tilde U_1^\varepsilon$}\label{sec:tightness}

Recall that $\varepsilon\in\{-1,1\}^\N$ is fixed. Let $G$ be either $\tilde Z^\varepsilon_1$ or $\tilde U^\varepsilon_1$. To show the tightness, we will prove
\equh\label{eq:tightness}
\lim_{\delta\to0}\limsup_{n \to\infty} \P\left(\sup_{|t-s|\le \delta}|G(nt)-G(ns)|\ge \eta \sigma_n\right)=0 \mbox{ for all } \eta>0.
\eque
The tightness then follows from the Corollary of Theorem 13.4 in \citep{Bil99}. 
To prove~\eqref{eq:tightness}, we first show the following two lemmas.
\begin{lemma}\label{lem:momentbound}
 Let $G$ be either $\tilde Z_1^\varepsilon$ or $\tilde U_1^\varepsilon$.
For all integer $p\ge 1$ and $\gamma \in(0,\alpha)$, there exits a constant $C_{p,\gamma }>0$ such that for all $s,t\in[0,1]$, for all $n\ge 1$,
\begin{equation}\label{bound:moment}
\E|G(ns)-G(nt)|^{2p} \le C_{p,\gamma } \left(|t-s|^{\gamma  p}\sigma_n^{2 p}+|t-s|^{\gamma }\sigma_n^2 \right).
\end{equation}
\end{lemma}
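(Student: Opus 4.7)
The plan is to handle both $G=\tilde Z_1^\varepsilon$ and $G=\tilde U_1^\varepsilon$ by the same strategy: write $G(nt)-G(ns)=\sum_{k\ge 1} X_k^{(n,s,t)}$ as a sum of independent, centered random variables uniformly bounded by $2$, and then apply the Rosenthal inequality for independent centered sums,
$$
\E\Bigl|\sum_k X_k\Bigr|^{2p}\le C_p\Bigl[\Bigl(\sum_k \E X_k^2\Bigr)^p+\sum_k \E|X_k|^{2p}\Bigr].
$$
Because $|X_k|\le 2$, one has $\E|X_k|^{2p}\le 2^{2p-2}\E X_k^2$, so the second term on the right is a constant multiple of $\sum_k \E X_k^2$. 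The whole task therefore reduces to estimating $\sum_k \E X_k^2$ in terms of $V(n(t-s))$; Lemma~\ref{lem:V} applied with any chosen $\gamma\in(0,\alpha)$ then gives $V(n(t-s))\le C_\gamma |t-s|^\gamma \sigma_n^2$, and the two pieces produced by Rosenthal turn into exactly the two terms $|t-s|^{\gamma p}\sigma_n^{2p}$ and $|t-s|^\gamma \sigma_n^2$ claimed in the lemma.

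Establishing the variance bound $\sum_k \E X_k^2\le C\cdot V(n(t-s))$ is short. Assuming $s\le t$, for $G=\tilde Z_1^\varepsilon$ the monotonicity of $N_k$ gives $(\indd{N_k(nt)\ne 0}-\indd{N_k(ns)\ne 0})^2=\indd{N_k(ns)=0,\,N_k(nt)\ne 0}$, whose expectation equals $(1-\tilde p_k(ns))\tilde p_k(n(t-s))\le \tilde p_k(n(t-s))$ by the independent stationary increments of the Poisson process. Summing over $k$ yields $\sum_k \E X_k^2\le V(n(t-s))$. For $G=\tilde U_1^\varepsilon$, the identity $(\indd{a\text{ odd}}-\indd{b\text{ odd}})^2=\indd{a-b\text{ odd}}$ combined with stationary independent increments gives $\E X_k^2\le \tilde q_k(n(t-s))=\tfrac12\tilde p_k(2n(t-s))$; then the elementary pointwise inequality $1-e^{-2x}\le 2(1-e^{-x})$ gives $V(2u)\le 2V(u)$, so again $\sum_k\E X_k^2\le V(n(t-s))$ up to a harmless constant.

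I do not expect any deep obstacle. The one point of care is that Rosenthal's inequality is typically stated for finite sums, so one first applies it to the partial sums $\sum_{k\le K}X_k$ and passes to the limit $K\to\infty$; all series in sight converge absolutely since $V(n)<\infty$ for each $n$. A conceptual remark worth making is why Rosenthal's two-term form is needed at all: the summands are essentially of indicator type, so their $L^{2p}$ norms are comparable to their $L^2$ norms rather than to those $L^2$ norms raised to the $p$-th power, and this is precisely the reason why the bound must accommodate the two different regimes $|t-s|^{\gamma p}\sigma_n^{2p}$ and $|t-s|^\gamma\sigma_n^2$ appearing on the right-hand side of \eqref{bound:moment}.
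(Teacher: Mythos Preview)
Your argument is correct. The variance bounds you derive for both cases are fine (note that $X_k$ is the \emph{centered} difference, so strictly speaking $\E X_k^2=\Var(\cdot)\le \E[(\indd{N_k(nt)\ne0}-\indd{N_k(ns)\ne0})^2]$, but this is the trivial inequality $\Var\le$ second moment), and the appeal to Rosenthal together with Lemma~\ref{lem:V} indeed produces exactly the two terms in \eqref{bound:moment}.

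The paper's proof is very close in spirit but does not invoke Rosenthal as a black box; instead it establishes the intermediate bound $\E|G(t)-G(s)|^{2p}\le C_p\bigl(V(2(t-s))^p+V(2(t-s))\bigr)$ by induction on $p$, expanding the $2p$-th moment over multi-indices $(k_1,\dots,k_p)$ and splitting according to whether $k_1\in\{k_2,\dots,k_p\}$. That inductive splitting is essentially a hand-made derivation of the Rosenthal-type two-term bound for bounded independent centered summands. Your route is more streamlined: it replaces the induction by a single citation of a standard inequality, at the minor cost of relying on an external result. Both approaches reduce the problem to the same variance estimate $\sum_k\E X_k^2\le C\,V(n(t-s))$ and then finish via Lemma~\ref{lem:V}; your explicit remark that the two Rosenthal terms correspond precisely to the two regimes $|t-s|^{\gamma p}\sigma_n^{2p}$ and $|t-s|^\gamma\sigma_n^2$ is a nice conceptual observation that the paper leaves implicit.
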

\begin{lemma}\label{lem:Poissonbound}
 Let $G$ be either $\tilde Z_1^\varepsilon$ or $\tilde U_1^\varepsilon$.
For all $t\le s \le t+\delta$,
\begin{equation}\label{bound:Poisson}
 |G(t)-G(s)|\le N(t+\delta)-N(t)+\delta, \mbox{ almost surely,}
\end{equation}
where $N$ is the Poisson process in the definition of $\tilde Z_1^\varepsilon$ and $\tilde U_1^\varepsilon$.
\end{lemma}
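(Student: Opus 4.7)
\smallskip

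The plan is to split $G(s)-G(t)$ into a random indicator part and a deterministic centering part, and bound each by one of the two terms on the right-hand side of \eqref{bound:Poisson}.

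First I would write, for $G=\tilde Z_1^\varepsilon$,
\[
G(s)-G(t)=\sum_{k\ge 1}\varepsilon_k\bigl(\ind_{\{N_k(s)\ne 0\}}-\ind_{\{N_k(t)\ne 0\}}\bigr)-\sum_{k\ge 1}\varepsilon_k\bigl(\tilde p_k(s)-\tilde p_k(t)\bigr),
\]
and analogously for $G=\tilde U_1^\varepsilon$ with $\ind_{\{N_k(\cdot)\text{ odd}\}}$ and $\tilde q_k$.

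For the random part, the key observation is that for each $k$, the indicator $\ind_{\{N_k(\cdot)\ne 0\}}$ (resp.\ $\ind_{\{N_k(\cdot)\text{ odd}\}}$) can only change between times $t$ and $s$ if the counting process $N_k$ has at least one jump in $(t,s]$. Thus each summand is bounded in absolute value by $N_k(s)-N_k(t)$, and summing in $k$ gives
\[
\Bigl|\sum_{k\ge 1}\varepsilon_k\bigl(\ind_{\{N_k(s)\ne 0\}}-\ind_{\{N_k(t)\ne 0\}}\bigr)\Bigr|\le \sum_{k\ge 1}(N_k(s)-N_k(t))=N(s)-N(t)\le N(t+\delta)-N(t),
\]
and likewise in the odd case.

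For the deterministic part, I would use the sub-additivity estimates \eqref{subadd:p} and \eqref{subadd:q} together with the elementary bound $1-e^{-x}\le x$: since $\tilde p_k(s-t)=1-e^{-p_k(s-t)}\le p_k(s-t)$ and $\tilde q_k(s-t)\le p_k(s-t)$, and since $(p_k)_{k\ge 1}$ is a probability on $\N$ so $\sum_k p_k=1$, we get
\[
\Bigl|\sum_{k\ge 1}\varepsilon_k\bigl(\tilde p_k(s)-\tilde p_k(t)\bigr)\Bigr|\le \sum_{k\ge 1}\tilde p_k(s-t)\le (s-t)\sum_{k\ge 1}p_k= s-t\le \delta,
\]
and analogously for $\tilde q_k$. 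Combining the two bounds yields \eqref{bound:Poisson}.

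There is no real obstacle here; the proof is essentially a careful bookkeeping. The only point worth flagging is that one must identify $\sum_{k\ge 1}(N_k(s)-N_k(t))$ with $N(s)-N(t)$ (which follows from the construction $N_k(t)=\sum_{\ell=1}^{N(t)}\ind_{\{Y_\ell=k\}}$) and recall that $\mu$ being a probability measure on $\N$ is exactly what converts the $O(s-t)$ bound on each term into a uniform $O(\delta)$ bound after summing over $k$.
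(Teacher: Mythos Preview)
Your proof is correct and follows essentially the same approach as the paper: split into the random indicator part and the deterministic centering part, bound the former by $N(s)-N(t)\le N(t+\delta)-N(t)$ using that the indicator can only change when $N_k$ jumps, and bound the latter via \eqref{subadd:p}--\eqref{subadd:q} and $\sum_k\tilde p_k(s-t)\le\sum_k p_k(s-t)=s-t\le\delta$. The paper phrases the last step as $\sum_k\tilde p_k(s-t)\le\E(N(s-t))=s-t$, but this is the same inequality.
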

A chaining argument is then applied to establish the tightness by proving the following. 
\begin{lemma}\label{lem:tight}
 If a process $G$ satisfies \eqref{bound:moment} and \eqref{bound:Poisson} for a Poisson process $N$, then~\eqref{eq:tightness} holds.
\end{lemma}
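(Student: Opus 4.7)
The plan is to combine a dyadic chaining argument based on~\eqref{bound:moment} with the pathwise Poisson bound~\eqref{bound:Poisson} in order to control the modulus of continuity
\[
\omega_n(\delta):=\sup_{|t-s|\le\delta}\abs{G(nt)-G(ns)}.
\]
What~\eqref{eq:tightness} demands is $\P(\omega_n(\delta)\ge\eta\sigma_n)\to 0$ when one first lets $n\to\infty$ and then $\delta\to 0$. The key obstruction is the second summand $|t-s|^{\gamma}\sigma_n^2$ in~\eqref{bound:moment}: it does not decay fast enough in $|t-s|$ to make the standard Kolmogorov criterion applicable down to arbitrarily fine scales, so one cannot chain all the way to the continuum on moments alone. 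The remedy is to chain only down to a scale of order $n^{-\theta}$ for an appropriate $\theta\in(1-\alpha/2,1)$, and absorb the remaining very small scales using the Poisson bound.

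Concretely, I would fix a large integer $p$, parameters $\gamma\in(0,\alpha)$ and $\beta\in(0,\gamma/2-1/(2p))$, and set $K_0:=\lfloor\log_2(1/\delta)\rfloor$, $K_1:=\lfloor\theta\log_2 n\rfloor$. Writing $\pi_k(t):=\lfloor t2^k\rfloor 2^{-k}$ and
\[
M_k:=\max_{0\le j<2^k}\abs{G(n(j+1)/2^k)-G(nj/2^k)},
\]
the elementary observation that $|\pi_{K_0}(t)-\pi_{K_0}(s)|\in\{0,2^{-K_0}\}$ when $|t-s|\le 2^{-K_0}$, together with a telescoping over dyadic levels from $K_0$ up to $K_1$, yields
\[
\omega_n(\delta)\le 2\sup_{t\in[0,1]}\abs{G(nt)-G(n\pi_{K_1}(t))}+2\sum_{k=K_0}^{K_1}M_k.
\]

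For the chain sum, I would pick thresholds $a_k:=c_\eta\sigma_n 2^{-\beta(k-K_0)}$ with $c_\eta$ small enough that $\sum_{k\ge K_0}a_k\le\eta\sigma_n/4$, and apply Markov's inequality together with~\eqref{bound:moment} to get
\[
\P(M_k\ge a_k)\lesssim 2^{k(1-\gamma p+2p\beta)}2^{-2p\beta K_0}+2^{k(1-\gamma+2p\beta)}2^{-2p\beta K_0}\sigma_n^{2-2p}.
\]
The choice $\beta<\gamma/2-1/(2p)$ makes the first piece geometrically summable in $k$ with total of order $2^{-(\gamma p-1)K_0}\to 0$ as $\delta\to 0$ (uniformly in $n$); the second piece is dominated by its $k=K_1$ value, of order $n^{\theta(1-\gamma+2p\beta)+\alpha(1-p)}\delta^{2p\beta}$, and the exponent of $n$ is made strictly negative by taking $p$ large and $\beta$ correspondingly small. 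For the endpoint supremum the Poisson bound~\eqref{bound:Poisson} gives
\[
\sup_{t\in[0,1]}\abs{G(nt)-G(n\pi_{K_1}(t))}\le\max_{0\le j<2^{K_1}}\bb{N(n(j+1)2^{-K_1})-N(nj2^{-K_1})}+n2^{-K_1},
\]
and since the $2^{K_1}$ increments of $N$ are i.i.d.\ Poisson of mean $n^{1-\theta}(1+o(1))$, a union bound together with a high-moment estimate $\E[\text{Poisson}(\lambda)^p]\le C_p\lambda^p$ for $\lambda\ge 1$ shows this maximum is of order $n^{1-\theta}$ in probability, which is $o(\sigma_n)$ since $\theta>1-\alpha/2$.

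The main technical obstacle is simultaneously balancing the three parameters: endpoint control forces $\theta>1-\alpha/2$, convergence of the chain sum forces $\beta<\gamma/2-1/(2p)$, and vanishing of the ``bad'' term coming from $|t-s|^{\gamma}\sigma_n^2$ in~\eqref{bound:moment} forces $\theta(1-\gamma+2p\beta)<\alpha(p-1)$. Because $\gamma$ may be taken arbitrarily close to $\alpha$, the quotient $\alpha/\gamma$ can be made to exceed $1-\alpha/2$; letting $p\to\infty$ and then $\beta\to 0^+$ opens a non-empty range for $\theta$, which is what allows the scheme to close and yields~\eqref{eq:tightness}.
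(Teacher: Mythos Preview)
Your proposal is correct and follows essentially the same route as the paper: chain dyadically, using the moment bound~\eqref{bound:moment}, down to a mesoscopic scale dictated by $n$, then invoke the Poisson pathwise bound~\eqref{bound:Poisson} for the residual fine-scale oscillation. The implementation differs only cosmetically---you chain globally with geometric thresholds and a polynomial Poisson tail bound, whereas the paper chains within each $\delta$-block with thresholds $\eta_k=\eta/k(k+1)$ and an exponential Markov bound---and your closing remark about ``$\alpha/\gamma$ exceeding $1-\alpha/2$'' is muddled (that inequality is trivial), but the three constraints you list are indeed simultaneously satisfiable by first fixing $\gamma$, then taking $p$ large and $\beta$ small.
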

\begin{proof}[Proof of Lemma~\ref{lem:momentbound}]
We prove for $G=\tilde U_1^\varepsilon$. The case $G=\tilde Z_1^\varepsilon$ can be treated in a similar way and is omitted.
In view of Lemma~\ref{lem:V} it is sufficient to prove that for all $p\ge 1$ and all $0\le s<t\le 1$,
\begin{equation}\label{Vbound}
\E|G(t)-G(s)|^{2p}\le C_p\left(V(2(t-s))^p+ V(2(t-s)) \right),
\end{equation}
with the monotone increasing function $V$ defined in \eqref{equiv}.
We prove it by induction.
For $p=1$, by independence of the $N_k$, we have
\begin{align*}
\E|G(t)-G(s)|^2
&=\sum_{k\ge 1}\Var\left(\ind_{\{N_k(t)\text{ is odd}\}}-\ind_{\{N_k(s)\text{ is odd}\}}\right)\\ 
&\le \sum_{k\ge 1}\E\left(\ind_{\{N_k(t)\text{ is odd}\}}-\ind_{\{N_k(s)\text{ is odd}\}}\right)^2\le \sum_{k\ge 1}\tilde q_k(t-s) =\frac12 V(2(t-s)).
\end{align*}
Let $p\ge 1$ and assume that the property holds for $p-1$. We fix $0<s<t$, and  simplify the notations by setting 
\[
X_k:=\ind_{\{N_k(t)\text{ is odd}\}}-\tilde q_k(t)-\left(\ind_{\{N_k(s)\text{ is odd}\}}-\tilde q_k(s)\right).
\] 
Note that $|X_k|\le 2$ for all $k\ge 1$. Since $(X_k)_{k\ge1}$ is centered and independent, it follows that
\begin{align*}
 \E|G(t)-G(s)|^{2p}
&= \sum_{k_1,\ldots,k_p\ge 1 }\E\left( X_{k_1}^2\cdots X_{k_p}^2\right)\\
&\le \sum_{\substack{k_1,\ldots,k_p\ge 1\\k_1 \notin\{k_2,\ldots,k_p\}}}\E\left(X_{k_1}^2\right)\E\left( X_{k_2}^2\cdots X_{k_p}^2\right)+\sum_{\substack{k_1,\ldots,k_p\ge 1\\k_1 \in\{k_2,\ldots,k_p\}}}\E\left( X_{k_1}^2\cdots X_{k_p}^2\right)\\
&\le \left(\sum_{k_1\ge 1}\E\left(X_{k_1}^2\right)+4(p-1)\right)\sum_{k_2,\ldots,k_p\ge 1}\E\left( X_{k_2}^2\cdots X_{k_p}^2\right).
\end{align*}
By the induction hypothesis, we infer
\begin{align*}
 \E|G(t)-G(s)|^{2p}
&\le \left(\frac12 V(2(t-s))+4(p-1)\right)C_{p-1}\left(V(2(t-s))^{p-1}+ V(2(t-s) \right),
\end{align*}
and we deduce \eqref{Vbound} using the monotonicity of $V$.
\end{proof}

\begin{proof}[Proof of Lemma~\ref{lem:Poissonbound}]
Let $t\le s\le t+\delta$. Recalling~\eqref{subadd:p}, we have
\begin{align*}
|\tilde Z_1^\varepsilon(s)-\tilde Z_1^\varepsilon(t)|
&\le\sum_{k\ge1}\left|\ind_{\{N_k(s)\neq 0\}}-\ind_{\{N_k(t)\neq0\}}\right|+\sum_{k\ge1}|\tilde p_k(s)-\tilde p_k(t)|\\
&\le \sum_{k\ge 1}\ind_{\{N_k(s)-N_k(t)\ne 0\}}+\sum_{k\ge 1} \tilde p_k(s-t)\\
&\le N(s)-N(t)+\E\left(N(s-t)\right)\le N(t+\delta)-N(t)+\delta.
\end{align*}
Similarly, recalling~\eqref{subadd:q},
\begin{align*}
|\tilde U_1^\varepsilon(s)-\tilde U_1^\varepsilon(t)|
&\le\sum_{k\ge1}\left|\ind_{\{N_k(s)\text{ is odd}\}}-\ind_{\{N_k(t)\text{ is odd}\}}\right|+\sum_{k\ge1}|\tilde q_k(s)-\tilde q_k(t)|\\
&\le \sum_{k\ge 1}\ind_{\{N_k(s)-N_k(t)\ne 0\}}+\sum_{k\ge 1} \tilde q_k(s-t)\le N(t+\delta)-N(t)+\delta.
\end{align*}
\end{proof}

\begin{proof}[Proof of Lemma~\ref{lem:tight}]
Let $\eta>0$ be fixed. For $\delta\in(0,1)$ and $r:=\floor{\frac{1}{\delta}}+1$, we set $t_i:=i\delta$ for $i=0,\ldots,r-1$, and $t_r:=1$. By \cite[Theorem 7.4]{Bil99}, we have
\begin{equation}\label{eq:chain0}
\P\left(\sup_{|t-s|\le \delta}|G(nt)-G(ns)|\ge 9\eta \sigma_n\right)\le
\sum_{i=1}^r\P\left(\sup_{t_{i-1}\le s\le t_i}|G(ns)-G(nt_{i-1})|\ge 3\eta \sigma_n\right).
\end{equation}
The sequel of the proof is based on a chaining argument. Fix $i\in\{1,\ldots,r\}$. For all $k\ge 1$, we introduce the subdivision of rank $k$ of the interval $[t_{i-1},t_i]$: 
$$
x_{k,\ell}:=t_{i-1}+\ell\frac{\delta}{2^k}, \; \text{for } k\ge 1\text{ and }\ell=0,\ldots,2^k.
$$
For $s\in[t_{i-1},t_i]$ and $n\ge1$, we define the chain $s_0:=t_{i-1}\le s_1\le\ldots\le s_{k_n}\le s$, where for each $k$, $s_k$ is the largest point among $(x_{k,\ell})_{\ell = 0,\dots,2^k}$ of rank $k$ that is smaller than $s$, and where we choose
\begin{equation}\label{def:kn}
k_n:=\floor{\log_2\left(2(e-1)\frac{n\delta}{\eta\sigma_n}\right)}+1.
\end{equation}
This choice of $k_n$ will become clearer later. 
For $t_{i-1}\le s\le t_i$, we write
\begin{equation}\label{eq:chain1}
|G(ns)-G(nt_{i-1})|
\le \sum_{k=1}^{k_n}|G(ns_k)-G(ns_{k-1})| + |G(ns)-G(ns_{k_n})|.
\end{equation}
Since we necessarily have $s_k=s_{k-1}$ or $s_k=s_{k-1}+\frac{\delta}{2^k}$, we infer that for all $k\ge 1$,
\begin{equation}\label{eq:chain2}
|G(ns_k)-G(ns_{k-1})|
\le \max_{\ell=1,\ldots, 2^k}|G(nx_{k,\ell})-G(n x_{k,\ell-1})|.
\end{equation}
Now, by Lemme~\ref{lem:Poissonbound}, we get 
\begin{align}
|G(ns)-G(ns_{k_n})|
&\le N(n(s_{k_n}+\delta2^{-k_n}))-N(ns_{k_n}) + n\delta2^{-k_n} \nonumber\\
&\le \max_{\ell=0,\ldots, 2^{k_n}-1} \pp{N(n(x_{k_n,\ell}+ \delta2^{-k_n}))-N(nx_{k_n,\ell})} + n\delta2^{-k_n}.\label{eq:chain3}
\end{align}
Observe that our choice of $k_n$ in \eqref{def:kn} gives $n\delta2^{-k_n}\le \eta \sigma_n$. By \eqref{eq:chain1}, \eqref{eq:chain2} and \eqref{eq:chain3}, we infer
\begin{align}
&\hspace{-30pt}\limsup_{n\to\infty} \P\left(\sup_{t_{i-1}\le s\le t_i}|G(ns)-G(nt_{i-1})|\ge 3\eta \sigma_n\right)\nonumber\\
&\le \limsup_{n\to\infty} \P\left(\sum_{k=1}^{k_n}\max_{\ell=1,\ldots, 2^k}|G(nx_{k,\ell})-G(n x_{k,\ell-1})|>\eta \sigma_n\right)\label{eq:chain4}\\
& \quad+ \limsup_{n\to\infty} \P\left(\max_{\ell=0,\ldots, 2^{k_n}-1} \pp{N(n(x_{k_n,\ell}+ \delta2^{-k_n}))-N(nx_{k_n,\ell})}>\eta \sigma_n\right).\label{eq:chain5}
\end{align}
For~\eqref{eq:chain5}, using exponential Markov inequality and the fact that $\E(e^{N(x)})=e^{x(e-1)}$,  we infer
\begin{multline*}
\P\left(\max_{\ell=1,\ldots, 2^k} \left\{N(n(x_{k_n,\ell}+ \delta2^{-k_n}))-N(nx_{k_n,\ell})\right\}>\eta \sigma_n\right)\\
\le 2^{k_n}\P\left(N(n\delta2^{-k_n})>\eta \sigma_n\right)\leq 2^{k_n} e^{n\delta 2^{-k_n}(e-1)-\eta\sigma_n}.
\end{multline*}
Again by the choice of $k_n$ in \eqref{def:kn}, $2^{k_n}\leq 4(e-1)n\delta/(\eta\sigma_n)$ and $2^{-k_n}\leq \eta\sigma_n/(2(e-1)n\delta)$. Thus, the above inequality is bounded by 
$4(e-1){n\delta}/(\eta\sigma_n)e^{-\frac12\eta\sigma_n}$, which converges to 0 as $n\to\infty$.
So, the term \eqref{eq:chain5} vanishes and it remains to deal with \eqref{eq:chain4}.
Let $\eta_k:=\frac{\eta}{k(k+1)}$, $k\ge 1$, so that $\sum_{k\ge 1}\eta_k=\eta$. We have
\begin{align*}
&\hspace{-50pt}\P\left(\sum_{k=1}^{k_n}\max_{\ell=1,\ldots, 2^k}|G(nx_{k,\ell})-G(n x_{k,\ell-1})|>\eta \sigma_n\right)\\
&\le \sum_{k=1}^{k_n}\P\left(\max_{\ell=1,\ldots, 2^k}|G(nx_{k,\ell})-G(n x_{k,\ell-1})|>\eta_k \sigma_n\right)\\
&\le \sum_{k=1}^{k_n}\sum_{\ell=1}^{2^k}\P\left(|G(nx_{k,\ell})-G(n x_{k,\ell-1})|>\eta_k \sigma_n\right).
\end{align*}
Now, fix $\gamma \in(0,\alpha)$ and let $p\ge 1$ be an integer such that $\gamma  p>1$. Using Markov inequality at order $2p$ and the $2p$-th moment bound \eqref{bound:moment}, we get
 \begin{align*}
\P\Bigg(\sum_{k=1}^{k_n}&\max_{\ell=1,\ldots, 2^k}|G(nx_{k,\ell})-G(n x_{k,\ell-1})|>\eta \sigma_n\Bigg)
\le \sum_{k=1}^{k_n}\sum_{\ell=1}^{2^k}\eta_k^{-2p} \frac{\E\left|G(nx_{k,\ell})-G(n x_{k,\ell-1})  \right|^{2p}}{\sigma_n^{2p}}\\
&\le C_{p,\gamma } \sum_{k=1}^{k_n}\sum_{\ell=1}^{2^k}\eta_k^{-2p} \left(|x_{k,\ell}- x_{k,\ell-1}|^{\gamma  p}+ \frac{|x_{k,\ell}- x_{k,\ell-1}|^{\gamma } }{\sigma_n^{2(p-1)}}\right)\\
&\le C_{p,\gamma } \delta^{\gamma  p}\sum_{k=1}^{\infty}\eta_k^{-2p}2^{k(1-\gamma  p)}
+C_{p,\gamma }\delta^{\gamma } n^{\alpha(1-p)}L(n)^{1-p} \sum_{k=1}^{k_n}\eta_k^{-2p}2^{k(1-\gamma )}.
\end{align*}
In the right-hand side, since $\gamma p>1$, the series in the first term is converging and is independent of $n$. The sum in the second term is bounded, up to a multiplicative constant, by $2^{k_n(1-\gamma )}$ which is of order $n^{(1-\alpha/2)(1-\gamma )}$ (here and next line, up to a slowly varying function). Thus, the second term in the right-hand side is of order 
$n^{1-\alpha p+\alpha/2-\gamma + \gamma \alpha/2}\le n^{1-\gamma  p +(\alpha-\gamma )(1-p)}$ and vanishes as $n$ goes to $\infty$, again because we have assumed $\gamma  p>1$. So for~\eqref{eq:chain4}, we arrive at
\[
\limsup_{n\to\infty} \P\left(\sum_{k=1}^{k_n}\max_{\ell=1,\ldots, 2^k}|G(nx_{k,\ell})-G(n x_{k,\ell-1})|>\eta \sigma_n\right) \leq C\delta^{\gamma p}
\]
for some constant $C$ independent of $\delta$ and $\eta$. 
From \eqref{eq:chain0}, we conclude that \begin{align*}
\limsup_{n\to\infty} \P\left(\sup_{|t-s|\le \delta}|G(nt)-G(ns)|\ge 9\eta \sigma_n\right)
\le C' \left(\floor{\frac{1}{\delta}}+1\right)\delta^{\gamma  p}
\end{align*}
which goes to $0$ as $\delta\downarrow 0$. This yields~\eqref{eq:tightness}.
\end{proof}
\begin{remark}
For the Poissonized model, we can establish similar weak convergence to the decompositions as in Theroems~\ref{thm:Z} and~\ref{thm:U}, by adapting the proofs at the end of Section~\ref{sec:dP}. We omit this part. 
\end{remark}

\section{De-Poissonization}\label{sec:dP}

In this section we prove our main theorems. Recall the decompositions
\[
Z^\varepsilon = Z_1^\varepsilon+Z_2^\varepsilon \qmand U^\varepsilon = U_1^\varepsilon+U_2^\varepsilon,
\]
and
\[
\tilde Z^\varepsilon = \tilde Z_1^\varepsilon+\tilde Z_2^\varepsilon \qmand \tilde U^\varepsilon = \tilde U_1^\varepsilon+\tilde U_2^\varepsilon.
\]
Note that $G^\varepsilon$ and $\tilde G^\varepsilon$, for $G$ being $Z_1,Z_2,U_1,U_2$ respectively, are coupled in the sense that they are defined on the same probability space as functionals of the same $\varepsilon$ and $(Y_n)_{n\ge1}$. We have already established weak convergence results for $\tilde Z_1^\varepsilon,\tilde Z_2^\varepsilon, \tilde U_1^\varepsilon, \tilde U_2^\varepsilon$. The de-Poissonization step thus consists of controlling the distance between $G^\varepsilon$ and $\tilde G^\varepsilon$. 
We first prove the easier part.

\subsection{The processes $Z_2^\varepsilon$ and $U_2^\varepsilon$}
\begin{theorem}\label{thm:2}
For a Rademacher sequence $\varepsilon$,
$$
\left(\frac{Z_2^\varepsilon(\floor{nt})}{\sigma_n}\right)_{t\in[0,1]}\dconv
\left(\Z_2(t)\right)_{t\in[0,1]}\qmand \left(\frac{U_2^\varepsilon(\floor{nt})}{\sigma_n}\right)_{t\in[0,1]}\dconv
\left(\U_2(t)\right)_{t\in[0,1]},
$$
in $D([0,1])$, where $\Z_2$ and $\U_2$ are as in Theorems~\ref{thm:Z} and~\ref{thm:U}.
\end{theorem}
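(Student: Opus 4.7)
The plan is to deduce Theorem~\ref{thm:2} from Proposition~\ref{prop:Poisson2} by a direct de-Poissonization: I will show that $Z_2^\varepsilon(\floor{nt})$ and $\tilde Z_2^\varepsilon(nt)$ differ by a deterministic constant, uniformly in $t\in[0,1]$, which is negligible compared to $\sigma_n\to\infty$. Slutsky's theorem will then transfer the Poissonized limit to the original process, and the same scheme will handle $U_2^\varepsilon$.

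The key pointwise estimate is elementary. Using the two-sided bound $-x-x^2\le\log(1-x)\le -x$ on $[0,1/2]$ together with $ye^{-y}\le 1/e$ for $y\ge 0$, one obtains, for every $p_k\le 1/2$ and every $m\ge 0$,
\[
0\le e^{-mp_k}-(1-p_k)^m\le e^{-mp_k}\,mp_k^2\le p_k/e.
\]
Only finitely many $p_k$ exceed $1/2$ (since $p_k$ is non-increasing and $\sum_k p_k=1$ forces $p_k\to 0$), and for those $k$ the trivial bound $|p_k(m)-\tilde p_k(m)|\le 1$ contributes a finite amount. Summing and using $\sum_{k\ge 1}p_k=1$ therefore gives $\sum_{k\ge 1}|p_k(m)-\tilde p_k(m)|\le C$ uniformly in $m\ge 0$. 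Consequently,
\[
\sup_{m\ge 0}\bigl|Z_2^\varepsilon(m)-\tilde Z_2^\varepsilon(m)\bigr|\le\sum_{k\ge 1}|p_k(m)-\tilde p_k(m)|\le C
\]
almost surely, since $|\varepsilon_k|=1$.

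For the $\floor{nt}\mapsto nt$ gap, monotonicity of each $\tilde p_k$ combined with \eqref{subadd:p} and $nt-\floor{nt}\le 1$ yields the purely deterministic bound
\[
\sup_{t\in[0,1]}\bigl|\tilde Z_2^\varepsilon(nt)-\tilde Z_2^\varepsilon(\floor{nt})\bigr|\le \sum_{k\ge 1}\tilde p_k(nt-\floor{nt})\le\sum_{k\ge 1}\tilde p_k(1)=V(1)<\infty.
\]
Combining the two estimates, $\sup_{t\in[0,1]}|Z_2^\varepsilon(\floor{nt})-\tilde Z_2^\varepsilon(nt)|\le C+V(1)$, and dividing by $\sigma_n\to\infty$ together with Proposition~\ref{prop:Poisson2} and Slutsky's theorem delivers the first claim in $D([0,1])$. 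The argument for $U_2^\varepsilon$ is identical: applying the Taylor estimate with $2p_k$ in place of $p_k$ gives $\sum_k|q_k(m)-\tilde q_k(m)|\le C$, and \eqref{subadd:q} replaces \eqref{subadd:p} in the time-change step (with $V(1)$ replaced by $V(2)/2$). There is no substantial obstacle here; the only nontrivial ingredient is the uniform-in-$m$ Taylor estimate above, which is the whole content of the de-Poissonization for the ``easy'' second components.
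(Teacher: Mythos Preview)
Your proposal is correct and follows essentially the same route as the paper: both arguments reduce to Lemma~\ref{lem:approx}, showing that $\sup_{t\in[0,1]}\sum_{k}|p_k(\floor{nt})-\tilde p_k(nt)|$ (and likewise for $q_k$) is bounded uniformly in $n$, so that dividing by $\sigma_n\to\infty$ and invoking Proposition~\ref{prop:Poisson2} with Slutsky's theorem finishes. The only cosmetic difference is the elementary inequality chosen: the paper uses $e^{-my}-(1-y)^m\le \frac{1}{m}(1-e^{-my})$ on all of $[0,1]$ (summing to the bounded quantity $V(\floor{nt})/\floor{nt}$), whereas you use the Taylor bound $e^{-mp_k}-(1-p_k)^m\le p_k/e$ on $p_k\le 1/2$ together with a trivial bound on the finitely many remaining $k$; both give a summable majorant independent of $m$.
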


\begin{proof}
Thanks to the coupling, it suffices to show
for all $\varepsilon\in\{-1,1\}^\N$ fixed, 
\[
\lim_{n\to\infty}\sup_{t\in[0,1]}\frac{|\tilde G^\varepsilon(\floor{nt})- G^\varepsilon(nt)|}{\sigma_n} = 0 
\]
in probability,
with $G$ being $Z_2, U_2$ respectively. We actually prove the above convergence in the almost sure sense. Observe that for all $\varepsilon\in\{-1,1\}^\N$, 
\begin{align*}
|\tilde Z_2^\varepsilon(\floor{nt})- Z_2^\varepsilon(nt)| &\leq \sum_{k\geq 1}|\tilde  p_k(nt) - p_k(\floor{nt})|,\\
|\tilde U_2^\varepsilon(\floor{nt})-U_2^\varepsilon(nt)| &\leq \sum_{k\geq 1}|\tilde q_k(nt) - q_k(\floor{nt}) |.
\end{align*}
Thus, the proof is completed once the following 
 Lemma~\ref{lem:approx} is proved. 
\end{proof}

\begin{lemma}\label{lem:approx}
The following limits hold:
\begin{equation}\label{approx:p_k}
\lim_{n\to\infty}\frac{1}{\sigma_n}\sup_{t\in[0,1]}\sum_{k\ge 1}|\tilde p_k(nt)-p_k(\floor{nt})| =0
\end{equation}
and
\begin{equation}\label{approx:q_k}
\lim_{n\to\infty}\frac{1}{\sigma_n}\sup_{t\in[0,1]}\sum_{k\ge 1}|\tilde q_k(nt)-q_k(\floor{nt})| =0.
\end{equation}
\end{lemma}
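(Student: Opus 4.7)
The plan is to show that, before dividing by $\sigma_n$, both suprema in~\eqref{approx:p_k} and~\eqref{approx:q_k} are already bounded by a constant independent of $n$ and $t$. Since $\sigma_n \to \infty$ under~\eqref{RV}, this is strictly stronger than what is needed. The intuition is that $\tilde p_k(nt)-p_k(\floor{nt})$ is a ``continuous versus discrete'' discretization error that should not feel the regular variation assumption at all; this is why the lemma holds with the much weaker hypothesis $\sigma_n\to\infty$.

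For \eqref{approx:p_k}, set $m := \floor{nt}$ and split
\[
\tilde p_k(nt) - p_k(m) = (1-p_k)^m - e^{-p_k nt} = \left[(1-p_k)^m - e^{-p_k m}\right] + \left[e^{-p_k m} - e^{-p_k nt}\right].
\]
For the second bracket, the mean value theorem gives $|e^{-p_k m} - e^{-p_k nt}| \leq p_k(nt - m) \leq p_k$, whose sum over $k$ is at most $\sum_k p_k \leq 1$. For the first bracket, write $1-p_k = \exp(-p_k - \phi(p_k))$ with $\phi(x) := -x -\log(1-x) = \sum_{j\geq 2} x^j/j$, and note that $\phi(x) \leq x^2/(2(1-x)) \leq x^2$ for $x\in[0,1/2]$. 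This yields, for $p_k \leq 1/2$,
\[
0 \leq e^{-p_k m} - (1-p_k)^m = e^{-p_k m}\bigl(1-e^{-m\phi(p_k)}\bigr)\leq m p_k^2 e^{-p_k m}.
\]
The key observation is then the elementary bound $y e^{-y} \leq 1/e$ for $y \geq 0$, which gives
\[
\sum_{k : p_k \leq 1/2} m p_k^2 e^{-p_k m} = \sum_{k : p_k \leq 1/2} p_k\,(m p_k\, e^{-m p_k}) \leq \frac{1}{e}\sum_k p_k \leq \frac{1}{e}.
\]
The at most two indices with $p_k > 1/2$ contribute $O(1)$ trivially, as each such term is bounded by $(1-p_k)^m + e^{-p_k m}\leq 2$. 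Hence the whole sum is uniformly bounded in $t\in[0,1]$ and $n\geq 1$.

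For \eqref{approx:q_k}, the identity $\tilde q_k(nt) - q_k(m) = \tfrac{1}{2}\bigl[(1-2p_k)^m - e^{-2p_k nt}\bigr]$ reduces the problem to the same argument applied with $2p_k$ in place of $p_k$. Since $\sum_k 2p_k \leq 2$ and at most one index has $2p_k > 1$ (in which case $|(1-2p_k)^m|\leq 1$ can be bounded individually), the conclusion follows identically. No real obstacle arises: the only analytic inputs are the Taylor remainder bound $\phi(x)\leq x^2$ and the inequality $y e^{-y}\leq 1/e$, and the argument is entirely uniform in $t\in[0,1]$.
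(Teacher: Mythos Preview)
Your proof is correct and follows the same decomposition as the paper: split through $e^{-p_k m}$ with $m=\floor{nt}$, bound the ``time-rounding'' piece by $\sum_k p_k\le 1$, and bound the ``Poisson-versus-binomial'' piece $e^{-p_k m}-(1-p_k)^m$ uniformly. The only difference is cosmetic: for the latter piece the paper invokes the inequality $e^{-my}-(1-y)^m\le m^{-1}(1-e^{-my})$ and sums to $V(m)/m$, which is bounded, whereas you use the Taylor bound $\phi(x)\le x^2$ together with $ye^{-y}\le 1/e$ to get $\sum_k mp_k^2 e^{-p_k m}\le 1/e$. Both routes yield an $O(1)$ bound on the pre-normalized supremum, and your treatment of the finitely many large $p_k$ (respectively of the possible index with $2p_k>1$ in the $q_k$ case) is adequate.
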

\begin{proof}
By triangular inequality, for all $n\ge 1$, $t\ge0$,
$$
\sum_{k\ge 1}|\tilde p_k(nt)-p_k(\floor{nt})|\le\sum_{k\ge 1} |\tilde p_k(\floor{nt})- \tilde p_k(nt)|+\sum_{k\ge 1}|\tilde p_k(\floor{nt})-p_k(\floor{nt})|.
$$
First, note that for all $k\ge 1$, 
$$
|\tilde p_k(\floor{nt})- \tilde p_k(nt)|\le \tilde p_k(\floor{nt}+1)- \tilde p_k(\floor{nt})=e^{-p_k\floor{nt}}(1-e^{-p_k}),
$$
and thus,
$$
\sum_{k\ge 1}|\tilde p_k(\floor{nt})- \tilde p_k(nt)|\le \sum_{k\ge 1}p_k=1.
$$
Further, if $\floor{nt}\ge 1$, using that $e^{-my}-(1-y)^m\le \frac1m(1-e^{-my})$ for all $0\le y\le 1$ and $m\in\N$, we have
\begin{align*}
\sum_{k\ge 1}|\tilde p_k(\floor{nt})-p_k(\floor{nt})|
&= \sum_{k\ge 1} \left(e^{-p_k\floor{nt}}-(1-p_k)^{\floor{nt}}\right)\\
&\le \frac{1}{\floor{nt}} \sum_{k\ge 1}(1-e^{-p_k\floor{nt}})
= \frac{V(\floor{nt})}{\floor{nt}},
\end{align*}
which is bounded (since $V(n)/n\to0$ as $n\to\infty$). We thus deduce \eqref{approx:p_k}. The proof for \eqref{approx:q_k} is similar and omitted.
\end{proof}

\subsection{The processes $Z_1^\varepsilon$ and $U_1^\varepsilon$}
In this section we prove Theorem~\ref{thm:1}.
The coupling of $Z_1^\varepsilon,\tilde Z_1^\varepsilon$ and $U_1^\varepsilon,\tilde U_1^\varepsilon$ respectively  takes a little more effort to control.

 \begin{proof}[Proof of Theorem~\ref{thm:1}]
Let $N$ be the Poisson process introduced in Section~\ref{sec:Poi} and denote by $\tau_i$ the $i$-th arrival time of $N$, $i\ge 1$, namely $\tau_i:=\inf\{t>0\mid N(t)=i\}$.
We introduce the random changes of time $\lambda_n:[0,\infty)\to [0,\infty)$, $n\ge 1$, given by
$$
\lambda_n(t):=\frac{\tau_{\floor{nt}}}{n},\quad t\ge 0.
$$
By constructions, we have
$$
Z^\varepsilon(\floor{nt})=\tilde Z^\varepsilon(n\lambda_n(t))
\qmand
\tilde U^\varepsilon(\floor{nt})=U^\varepsilon(n\lambda_n(t)), \mbox{ amost surely.}
$$
These identities do not hold for the process $Z_1^\varepsilon$ or $U_1^\varepsilon$ but we can still couple $Z_1^\varepsilon, \tilde Z_1^\varepsilon$ and $U_1^\varepsilon, \tilde U_1^\varepsilon$ via
\begin{align}\label{decomp:Z}
Z_1^\varepsilon(\floor{nt})&=\tilde Z_1^\varepsilon(n\lambda_n(t))+\sum_{k\ge 1}\varepsilon_k(\tilde p_k(n\lambda_n(t))-p_k(\floor{nt}))
\\
\label{decomp:U}
U_1^\varepsilon(\floor{nt})&=\tilde U_1^\varepsilon(n\lambda_n(t))+\sum_{k\ge 1}\varepsilon_k(\tilde q_k(n\lambda_n(t))-q_k(\floor{nt})).
\end{align}
The proof is now decomposed into two lemmas treating separately the two terms in the right-hand side of the preceding identities.
\begin{lemma}\label{lem:conv_lambda}
We have
$$
\left(\frac{\tilde Z_1^\varepsilon(n\lambda_n(t))}{\sigma_n}\right)_{t\in[0,1]}\dconv(\Z_1(t))_{t\in[0,1]}
\qmand
\left(\frac{\tilde U_1^\varepsilon(n\lambda_n(t))}{\sigma_n}\right)_{t\in[0,1]}\dconv(\U_1(t))_{t\in[0,1]}
$$
in $D([0,1])$.
\end{lemma}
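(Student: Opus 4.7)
The plan is to deduce the lemma from Proposition~\ref{prop:Poisson1} by a random time-change argument based on the continuous mapping theorem; I describe it for $\tilde Z_1^\varepsilon$, as the argument for $\tilde U_1^\varepsilon$ is identical. The first step is to observe that $\lambda_n$ converges uniformly to the identity on $[0,1]$. Indeed, the strong law of large numbers for the unit-rate Poisson process $N$ gives $\tau_n/n\to 1$ a.s., and the standard reinforcement using the monotonicity of $n\mapsto \tau_n$ shows $\max_{0\le k\le n}|\tau_k/n-k/n|\to 0$ a.s. Combined with $\sup_{t\in[0,1]}|\floor{nt}/n-t|\le 1/n$, this yields $\sup_{t\in[0,1]}|\lambda_n(t)-t|\to 0$ almost surely.

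Because the identity is a deterministic element of $D([0,1])$, the uniform convergence of $\lambda_n$ together with Proposition~\ref{prop:Poisson1} upgrades to the joint weak convergence $(\tilde Z_1^\varepsilon(n\cdot)/\sigma_n,\lambda_n)\dconv (\Z_1,\mathrm{id})$ in $D([0,1])^2$. I would then apply the continuous mapping theorem to the composition map $\Phi\colon (X,\lambda)\mapsto X\circ\lambda$. At any point $(X,\mathrm{id})$ with $X\in C([0,1])$, $\Phi$ is continuous: if $\|X_n-X\|_\infty\to 0$ (which is the proper notion of convergence in $D([0,1])$ when the limit is continuous) and $\|\lambda_n-\mathrm{id}\|_\infty\to 0$, then the estimate $|X_n(\lambda_n(t))-X(t)|\le \|X_n-X\|_\infty+\omega_X(\|\lambda_n-\mathrm{id}\|_\infty)$, where $\omega_X$ denotes the modulus of continuity of $X$, gives uniform convergence on $[0,1]$. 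The sample paths of $\Z_1$ (and likewise $\U_1$) are a.s.\ continuous: the moment bound~\eqref{bound:moment} together with Kolmogorov's continuity criterion ensures that the weak limit admits a continuous modification. Hence $\Phi$ is a.s.\ continuous at $(\Z_1,\mathrm{id})$, and
\begin{equation*}
\frac{\tilde Z_1^\varepsilon(n\lambda_n(\cdot))}{\sigma_n}=\Phi\!\left(\frac{\tilde Z_1^\varepsilon(n\cdot)}{\sigma_n},\lambda_n\right)\dconv \Phi(\Z_1,\mathrm{id})=\Z_1
\end{equation*}
in $D([0,1])$. The exact same argument handles $\tilde U_1^\varepsilon$.

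The only point requiring a bit of care is the joint convergence in $D([0,1])^2$: since $\lambda_n$ and $\tilde Z_1^\varepsilon$ are both functionals of the same Poisson process $N$ they are \emph{not} independent, but the fact that the second marginal converges in probability to the deterministic function $\mathrm{id}$ makes the joint convergence automatic from the marginal convergence of the first coordinate. This is the main technical observation; everything else reduces to invoking the continuous mapping theorem on the well-understood composition map at a continuous limit point.
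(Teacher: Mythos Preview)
Your approach via the continuous mapping theorem for the composition map is essentially the one the paper takes (there phrased as Billingsley's random change of time lemma), and all the ingredients you list --- uniform a.s.\ convergence of $\lambda_n$ to the identity, joint convergence via Slutsky since the second marginal is asymptotically deterministic, a.s.\ continuity of $\Z_1$ --- are correct and used in the same way.

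There is, however, one genuine gap. The random time change $\lambda_n(t)=\tau_{\floor{nt}}/n$ need not take values in $[0,1]$; in particular $\lambda_n(1)=\tau_n/n>1$ with positive probability for every $n$. Hence the composition $X_n\circ\lambda_n$ with $X_n=\tilde Z_1^\varepsilon(n\,\cdot\,)/\sigma_n\in D([0,1])$ is not well defined, and your continuity estimate
\[
|X_n(\lambda_n(t))-X(t)|\le \|X_n-X\|_\infty+\omega_X(\|\lambda_n-\mathrm{id}\|_\infty)
\]
tacitly evaluates $X_n$ and $X$ outside their domain. The paper confronts exactly this point: it first truncates to $\lambda_n^*(t):=\min(\lambda_n(t),1)$, applies the time-change lemma to get convergence with $\lambda_n^*$ in $D([0,1])$, notes that $\P(\lambda_n^*\ne\lambda_n \text{ on }[0,1-\eta])\to 0$, and then recovers the full interval by rerunning the argument with Proposition~\ref{prop:Poisson1} on the enlarged interval $[0,1/(1-\eta)]$. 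Your proof can be repaired in the same way, or --- more directly --- by observing from the outset that the proof of Proposition~\ref{prop:Poisson1} works verbatim on $D([0,T])$ for any fixed $T>1$, so that $\lambda_n([0,1])\subset[0,T]$ eventually a.s.\ and the composition map $D([0,T])\times D([0,1])\to D([0,1])$ is legitimately in play.
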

\begin{proof}
We only prove the first convergence. The proof of the second is the same by replacing $(\tilde Z_1^\varepsilon,\Z_1)$ by $(\tilde U_1^\varepsilon,\U_1)$. 
For $t\ge0$, by the law of large numbers, $\lambda_n(t)\to t$ almost surely as $n\to\infty$. Since the $\lambda_n$ are nondecreasing, almost surely the convergence holds for all $t\ge0$, and by P\'olya's extension of Dini's theorem (see \cite[Problem 127]{PolSze72}) the convergence is uniform for $t$ in a compact interval.
That is
\[
\lim_{n\to\infty}\sup_{t\in[0,1]}|\lambda_n(t)-t|=0 \mbox{ almost surely},
\]
and $\lambda_n$ converges almost surely to the identity function $\mathbb I$ in $D([0,1])$. 

We want to apply the random change of time lemma from \citet[p.~151]{Bil99}. However, $\lambda_n$ is not a good candidate as it is not bounded between $[0,1]$. Instead, we introduce
\[
\lambda_n^*(t) :=\min\left(\lambda_n(t), 1\right),\quad t\geq 0.
\]
Observe that by monotonicity, 
\[
\sup_{t\in[0,1]}|\lambda_n^*(t)-t|\leq \sup_{t\in[0,1]}|\lambda_n(t)-t|.
\]
Thus, $\lambda_n^*$ converges almost surely to $\mathbb I$ in $D([0,1])$. 
By Slutsky's lemma and Proposition \ref{prop:Poisson1}, we also have 
\begin{equation}\label{eq:lambda:Z}
\left(\left(\frac{\tilde Z_1^\varepsilon(nt)}{\sigma_n}\right)_{t\in[0,1]},(\lambda^*_n(t))_{t\in[0,1]}\right)\dconv \left((\Z_1(t))_{t\in[0,1]}, {\mathbb I} \right)
\end{equation}
in $D([0,1])\times D([0,1])$. 
Furthermore, since $\lambda_n^*$ is non-decreasing and bounded in $[0,1]$, thus by random change of time lemma we obtain 
\equh\label{eq:lambdan*}
\left(\frac{\tilde Z_1^\varepsilon(n\lambda_n^*(t))}{\sigma_n}\right)_{t\in[0,1]}\dconv(\Z_1(t))_{t\in[0,1]}
\eque
in $D([0,1])$. To obtain the desired result we need to replace $\lambda_n^*$ by $\lambda_n$. However, by definition, we only have, for all $\eta\in(0,1)$ fixed, 
\[
\P(\lambda_n^*\ne\lambda_n\text{ on }[0,1-\eta])\le\P\pp{\tau_{\floor{n(1-\eta)}}\ge  n}\to0\;\text{ as }n\to\infty.
\]
It then follows that, restricting the convergence of~\eqref{eq:lambdan*} in $D([0,1-\eta])$, 
\[
\pp{\frac{\tilde Z_1^\varepsilon(n\lambda_n(t))}{\sigma_n}}_{t\in[0,1-\eta]}\Rightarrow (\Z_1(t))_{t\in[0,1-\eta]}
\]
in $D([0,1-\eta])$.
This is strictly weaker than the convergence in $D([0,1])$ that we are looking for. However, looking back we see an easy fix as follows.
If one starts in~\eqref{eq:lambda:Z} with weak convergence for $\tilde Z_1^\varepsilon$ and $\lambda_n^*$ (modified accordingly) as processes indexed by  a slightly larger time interval, say in $D([0,1/(1-\eta)])$ for any $\eta\in(0,1)$ fixed, the desired result then follows.
\end{proof}

In view of Lemma~\ref{lem:approx}, the following lemma will be sufficient to conclude.
\begin{lemma}\label{lem:approx_lambda}
 The following limits hold:
\begin{equation*}
\lim_{n\to\infty}\frac{1}{\sigma_n}\sup_{t\in[0,1]}\sum_{k\ge 1}|\tilde p_k(nt)-\tilde p_k(n\lambda_n(t))| =0\text{ in probability}
\end{equation*}
and
\begin{equation}\label{approx:lambda-q_k}
\lim_{n\to\infty}\frac{1}{\sigma_n}\sup_{t\in[0,1]}\sum_{k\ge 1}|\tilde q_k(nt)-\tilde q_k(n\lambda_n(t))| =0\text{ in probability}.
\end{equation}
\end{lemma}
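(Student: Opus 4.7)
By sub-additivity~\eqref{subadd:p} applied termwise together with monotonicity of each $\tilde p_k$, one obtains the \emph{exact} identity
\[
\sum_{k\ge1}|\tilde p_k(nt)-\tilde p_k(n\lambda_n(t))| \;=\; V(nT_2(t))-V(nT_1(t)),
\]
where $T_1(t):=\min(t,\lambda_n(t))$ and $T_2(t):=\max(t,\lambda_n(t))$; the identity $\tilde q_k(s)=\tfrac12\tilde p_k(2s)$ reduces the second limit to the first up to a harmless doubling of arguments in $V$. The naive estimate $V(nT_2)-V(nT_1)\le V(n\Delta_n)$ with $\Delta_n:=\sup_t|\lambda_n(t)-t|=O_P(n^{-1/2})$ is not enough: by regular variation, $V(n\Delta_n)/\sigma_n$ behaves like $L(n^{1/2})/L(n)^{1/2}$, which is only $O_P(1)$ and need not tend to $0$. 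The key is to exploit that $V(nT_2)-V(nT_1)$ is much smaller than $V(n(T_2-T_1))$ whenever $T_1$ is bounded away from $0$, via the mean-value bound
\[
V(nT_2)-V(nT_1)\le n\,|\lambda_n(t)-t|\,V'(nT_1),
\]
together with the monotone density theorem estimate $V'(s)\sim\alpha\Gamma(1-\alpha)s^{\alpha-1}L(s)$ as $s\to\infty$.

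The technical ingredient is a localized deviation bound on $\lambda_n$. Kolmogorov's maximal inequality applied to the square-integrable mean-zero martingale $m\mapsto \tau_m-m$ at each dyadic scale $N=2^j\le n$ gives $\P(\sup_{m\le N}|\tau_m-m|>K\sqrt{N})\le 1/K^2$, and a union bound over $j\le\log_2 n$ yields, for every $\eta>0$, a constant $C_\eta$ such that with probability at least $1-\eta$,
\[
|\lambda_n(t)-t|\;\le\;C_\eta\sqrt{t\log n/n}\quad\text{for all }t\in[1/n,1].
\]
Set $\delta_n:=4C_\eta^2(\log n)/n$ and split $\sup_{t\in[0,1]}$ at $\delta_n$. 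On $[\delta_n,1]$ one has $T_1(t)\ge t/2$, so substituting the mean value bound and $V'(nT_1)\le C(nt)^{\alpha-1}L(n)$ produces
\[
V(nT_2)-V(nT_1)\;\le\; C(nt)^{\alpha-1/2}\sqrt{\log n}\,L(n).
\]
Dividing by $\sigma_n=n^{\alpha/2}L(n)^{1/2}$ leaves $n^{(\alpha-1)/2}t^{\alpha-1/2}\sqrt{\log n}\,L(n)^{1/2}$. A brief case split on whether $\alpha\ge 1/2$ (supremum at $t=1$) or $\alpha<1/2$ (supremum at $t=\delta_n$) shows that this decays polynomially in~$n$, uniformly over $t\in[\delta_n,1]$.

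On the remaining interval $[0,\delta_n]$ the localized bound yields $nT_2(t)\le 6C_\eta^2\log n$, hence $V(nT_2(t))\le V(6C_\eta^2\log n)=O((\log n)^\alpha L(\log n))$, and divided by $\sigma_n$ this is $o(1)$. Combining the two regions and letting $\eta\downarrow 0$ establishes the claimed convergence in probability; the $\tilde q_k$ case follows from the reduction noted at the outset. The main obstacle is the recognition that sub-additivity alone does not yield the conclusion (the crude ratio $V(n\Delta_n)/\sigma_n$ need not vanish) and the extraction of the localized rate $\sqrt{t\log n/n}$ via Kolmogorov plus a dyadic union bound; once this is in hand, the mean value step and the case analysis on $\alpha$ are routine.
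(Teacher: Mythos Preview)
Your argument is correct and takes a genuinely different route from the paper's. Both proofs split $[0,1]$ into a small-$t$ and a large-$t$ regime, but the ingredients differ. The paper invokes Donsker's theorem twice: once for $\Lambda_n(t)=n^{1/2}(\lambda_n(t)-t)$ on all of $[0,1]$ to get $\sup_t|\lambda_n(t)-t|=O_\P(n^{-1/2})$, and once more, via the self-similar rescaling $\lambda_n(n^{-\beta}t)-n^{-\beta}t=n^{-(1+\beta)/2}\Lambda_{n^{1-\beta}}(t)$, to get a sharper bound on the short interval $[0,n^{-\beta}]$. Your single localized estimate $|\lambda_n(t)-t|\le C_\eta\sqrt{t\log n/n}$, obtained by Kolmogorov's inequality at dyadic scales plus a union bound, replaces both and lets you push the split point down to $\delta_n\asymp(\log n)/n$. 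For the large-$t$ part, the paper works termwise on $\sum_k e^{-2p_k nT_1}\tilde q_k(n|\lambda_n(t)-t|)$ and recombines into $V$ via the elementary inequalities $1-e^{-x}\le x$ and $xe^{-x}\le1-e^{-x}$; your mean-value bound $V(nT_2)-V(nT_1)\le n|\lambda_n(t)-t|\,V'(nT_1)$ together with the monotone density theorem does the same job more directly, and your opening identity $\sum_k|\tilde p_k(nt)-\tilde p_k(n\lambda_n(t))|=V(nT_2)-V(nT_1)$ and the reduction $\tilde q_k(s)=\tfrac12\tilde p_k(2s)$ make the presentation cleaner.

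One small point to tighten: the bound $V'(nT_1)\le C(nt)^{\alpha-1}L(n)$ is not uniform over $t\in[\delta_n,1]$ as stated, because $L(nt/2)/L(n)$ need not stay bounded as $t\downarrow\delta_n$. A Potter correction $L(nt/2)\le C_\epsilon L(n)t^{-\epsilon}$ (valid once $nt/2$ exceeds a fixed threshold, which holds for $t\ge\delta_n$ and $n$ large) introduces an extra factor $t^{-\epsilon}$; with $\epsilon$ chosen small relative to $\alpha$ this is absorbed harmlessly in your case analysis on $\alpha\gtrless 1/2$, and the polynomial decay in $n$ survives. With that adjustment the proof is complete.
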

\begin{proof}
We only prove the second limit. The first one can be proved in a similar way and is omitted. 
We first introduce 
\[
\Lambda_n(t):=n^{\frac12}(\lambda_n(t)-t)={n^{-\frac12}}(\tau_{\floor{nt}}-nt).
\]
Since $\tau_n$ is the sum of i.i.d.\ random variables with exponential distribution of rate $1$, and since $n^{-\frac12}(nt-\floor{nt})$ converges to $0$ uniformly in $t$, by Donsker's theorem and Slutsky's lemma, we have
$$
\left(\Lambda_n(t)\right)_{t\in[0,1]}\dconv \left(\B(t)\right)_{t\in[0,1]}\text{ in }D([0,1]),
$$
where $\B$ is a standard Brownian motion. By the continuous mapping theorem, the sequence $\sup_{t\in[0,1]} |\Lambda_n(t)|$ weakly converges to $\sup_{t\in[0,1]}|\B(t)|$, as $n\to\infty$. In particular, $(\sup_{t\in[0,1]} |\Lambda_n(t)|)_{n\ge1}$ is tight. So, for any $\eta>0$, there exits $K_\eta>0$ such that for $n$ large enough,
\begin{equation}\label{Lambda:tight}
\P\left(\sup_{t\in[0,1]} |\Lambda_n(t)|>K_\eta\right)\le\eta.
\end{equation}

\medskip

Now, choose $\beta\in(0,1/2)$ and consider
$$
A_n:=\sup_{t\in[0,n^{-\beta}]}\sum_{k\ge 1}|\tilde q_k(nt)-\tilde q_k(n\lambda_n(t))|\;\text{ and }\; B_n:=\sup_{t\in[n^{-\beta},1]}\sum_{k\ge 1}|\tilde q_k(nt)-\tilde q_k(n\lambda_n(t))|.
$$
Concerning $A_n$, using the bound in \eqref{subadd:q}, we have
$$
A_n\le \sup_{t\in[0,n^{-\beta}]}\sum_{k\ge 1}\tilde q_k(n|\lambda_n(t)-t|)\\
=\sup_{t\in[0,1]}\sum_{k\ge 1}\tilde q_k(n|\lambda_n(n^{-\beta}t)-n^{-\beta}t|).
$$
We can write
\[
\lambda_n(n^{-\beta}t)-n^{-\beta}t=\frac{\Lambda_{n^{1-\beta}}(t)}{n^{\frac{1+\beta}{2}}}.
\]
For any $\eta>0$, using \eqref{Lambda:tight}, by monotonicity of $\tilde q_k(\cdot)$, we infer that  for $n$ large enough 
$$
\P\left(A_n\le \sum_{k\ge 1}\tilde q_k\left(n\cdot n^{-\frac{1+\beta}{2}}K_\eta \right)\right)>1-\eta.
$$
But
\begin{align*}
\frac{1}{\sigma_n}\sum_{k\ge 1}\tilde q_k\left(n^{1-(1+\beta)/2}K_\eta \right)
&=\frac{1}{2\sigma_n}V\left(2n^{(1-\beta)/2}K_\eta \right)\\
&\sim \Gamma(1-\alpha)2^{\alpha-1} K_\eta^\alpha n^{-\beta\alpha/2}\frac{L(n^{(1-\beta)/2})}{L(n)^{1/2}} \to 0 \mbox{ as $n\to\infty$}.
\end{align*}
Thus, $A_n/\sigma_n$ converges to $0$ in probability as $n$ goes to $\infty$.

Concerning $B_n$, using the identity \eqref{subadd:q}, we can write
\begin{align*}
B_n&=\sup_{t\in[n^{-\beta},1]}\sum_{k\ge 1}\left(1-2\tilde q_k(n\min(\lambda_n(t), t))\right)\tilde q_k(n|\lambda_n(t)-t|)\\
&=\sup_{t\in[n^{-\beta},1]}\sum_{k\ge 1}e^{-2p_kn\min(\lambda_n(t),t)}\tilde q_k(n|\lambda_n(t)-t|).
\end{align*}
Now, for $t\in[n^{-\beta},1]$, observe that if for some $K>0$, $|\Lambda_n(t)|\leq K$  and $n^{\frac12-\beta}>2K$, then
$$
\lambda_n(t) =t+ n^{-1/2}\Lambda_n(t) \ge t-n^{-1/2}|\Lambda_n(t)|\ge t-\frac{n^{-\beta}}{2}\ge \frac{t}{2},
$$
and thus $\min(\lambda_n(t),t) \ge \frac{t}{2}$.
Let $\eta>0$ and $K_\eta$ be as in \eqref{Lambda:tight}. Assume $n$ is large enough so that \eqref{Lambda:tight} holds and $n^{\frac12-\beta}>2K_\eta$ (which is possible since we have chosen $\beta\in(0,1/2)$). By the preceding observation and by monotonicity of $\tilde q_k(\cdot)$, we infer
$$
\P\left(B_n\le \sup_{t\in[n^{-\beta},1]}\sum_{k\ge 1}e^{-p_knt}\tilde q_k\left(n\cdot n^{-\frac12} K_\eta\right)\right)>1-\eta.
$$
Now, using $1-e^{-x}\le x$ and then $xe^{-x}\le 1-e^{-x}$ for $x>0$, we get 
\begin{multline*}
\sup_{t\in[n^{-\beta},1]}\sum_{k\ge 1}e^{-p_knt}\tilde q_k\left(n^{1-\frac12} K_\eta\right)
= \sum_{k\ge 1}e^{-p_kn^{1-\beta}}\frac12\left(1-e^{-2p_k n^{\frac12} K_\eta}\right)\\
\leq \sum_{k\ge 1}e^{-p_kn^{1-\beta}}p_kn^{\frac12} K_\eta
 \le  \sum_{k\ge 1}\left(1-e^{-p_kn^{1-\beta}}\right)n^{-\frac12+\beta} K_\eta= {n^{\beta-1/2}} V(n^{1-\beta})K_\eta.
\end{multline*}
Thus,
\begin{multline*}
\frac1{\sigma_n}\sup_{t\in[n^{-\beta},1]}\sum_{k\ge 1}e^{-p_knt}\tilde q_k\left(n^{1-\frac12} K_\eta\right) 
 \leq \frac{n^{\beta-1/2}V(n^{1-\beta})}{\sigma_{n}}K_\eta
 \\
\sim \Gamma(1-\alpha)K_\eta n^{(\beta-\frac12)(1-\alpha)}\frac{L(n^{(1-\beta)})}{L(n)^{1/2}}\to 0 \mbox{ as $n\to\infty$},
\end{multline*}
since $\beta\in(0,1/2)$. Thus $B_n/\sigma_n$ converges to $0$ in probability as $n$ goes to $\infty$. We have thus proved~\eqref{approx:lambda-q_k}.
\end{proof}
To sum up, the desired results now follow from \eqref{decomp:Z} and \eqref{decomp:U}, Lemmas \ref{lem:approx}, \ref{lem:conv_lambda} and \ref{lem:approx_lambda}, and Slutsky's lemma.
\end{proof}

\subsection{The trivariate processes}
Finally we conclude by establishing the main theorems.
\begin{proof}[Proof of Theorems~\ref{thm:Z} and~\ref{thm:U}]
We prove Theorem~\ref{thm:Z}. The proof for Theorem~\ref{thm:U} is the same. We denote by $\cE$ the $\sigma$-field generated by the $(\varepsilon_k)_{k\ge 1}$ which is then independent of $(Y_n)_{n\ge 1}$. Note that the process $Z_2^\varepsilon$ is $\cE$-measurable.
For any continuous and bounded function $f$ and $g$ from $D([0,1])$ to $\R$, we have 
\begin{align*}
\bigg|\E&\bigg(f\left(\frac{Z_1^\varepsilon(\floor{n\cdot})}{\sigma_n}\right)  g\left(\frac{Z_2^\varepsilon(\floor{n\cdot})}{\sigma_n}\right)\bigg)
-\E f(\Z_1)\E g(\Z_2)\bigg|\\
&=\left|\E\left[\E\left(f\left(\frac{Z_1^\varepsilon(\floor{n\cdot})}{\sigma_n}\right)\mmid \cE\right)g\left(\frac{Z_2^\varepsilon(\floor{n\cdot})}{\sigma_n}\right)\right]
-\E f(\Z_1)\E g(\Z_2)\right|\\
&\le \E\left|\E\left(f\left(\frac{Z_1^\varepsilon(\floor{n\cdot})}{\sigma_n}\right)\mmid \cE\right)-\E f(\Z_1)\right|\cdot \|g\|_\infty+\left|\E g\left(\frac{Z_2^\varepsilon(\floor{n\cdot})}{\sigma_n}\right)-\E g(\Z_2)\right|\cdot \|f\|_\infty.
\end{align*}
The first term goes to $0$ as $n\to\infty$ thanks to Theorem~\ref{thm:1} and the dominated convergence theorem. The second one goes to $0$ as $n\to\infty$ thanks to Theorem~\ref{thm:2}. By \cite[Corollary 1.4.5]{VaaWel96} we deduce that 
$$
\frac{1}{\sigma_n}\left(Z_1^\varepsilon(\floor{nt}),Z_2^\varepsilon(\floor{nt})\right)_{t\in[0,1]}\dconv \left(\Z_1(t),\Z_2(t)\right)_{t\in[0,1]},
$$
in $D([0,1])^2$ where $\Z_1$ and $\Z_2$ are independent. The rest of the theorem follows from the identity $Z^\varepsilon=Z_1^\varepsilon+Z_2^\varepsilon$.
\end{proof}

\subsection*{Acknowledgments} 
The authors would like to thank David Nualart and Gennady Samorodnitsky for helpful discussions.
The first author would like to thank the hospitality and financial support from Taft Research Center and Department of Mathematical Sciences at University of Cincinnati, for his visit in May and June 2015, when most of the results were obtained. The first author's research was partially supported by the R\'egion Centre project MADACA. The second author would like to thank the invitation and hospitality of 
Laboratoire de Math\'ematiques et Physique Th\'eorique, UMR-CNRS 7350, Tours, France, for his visit from April to July in 2014, when the project was initiated. The second author's research was partially supported by NSA grant H98230-14-1-0318.

\bibliographystyle{apalike}
\bibliography{references,references1}
\end{document}